%\NeedsTeXFormat{LaTex2e}
\documentclass[12pt,a4paper]{amsart}
\usepackage{amssymb,amsmath}
\usepackage{hyperref}

\textwidth=16.00cm
\textheight=22.00cm
\topmargin=0.00cm
\oddsidemargin=0.00cm
\evensidemargin=0.00cm
\headheight=14.4pt
\headsep=1cm
\numberwithin{equation}{section}
\hyphenation{semi-stable}
\emergencystretch=10pt

%% theoremlike environments

\newtheorem{theorem}{Theorem}[section]
\newtheorem{lemma}[theorem]{Lemma}
\newtheorem{proposition}[theorem]{Proposition}
\newtheorem{corollary}[theorem]{Corollary}

\theoremstyle{definition}
\newtheorem{definition}[theorem]{Definition}

\theoremstyle{remark}
\newtheorem{remark}[theorem]{Remark}
\newtheorem{notation}[theorem]{Notation}

\newcommand\Supp{\operatorname{Supp}}

\newcommand\Tor{\operatorname{Tor}}
\newcommand\Hom{\operatorname{Hom}}

\newcommand\Ext{\operatorname{Ext}}
\newcommand\Rad{\operatorname{Rad}}

\newcommand\im{\operatorname{Im}}

\newcommand\inj{\operatorname{inj}}
\newcommand\depth{\operatorname{depth}}

\newcommand\grade{\operatorname{grade}}

\newcommand{\End}{\operatorname{End}}
\newcommand{\qism}{\stackrel{\sim}{\longrightarrow}}

\begin{document}
\author[W. Mahmood]{Waqas Mahmood}
\title[Natural homomorphisms of local cohomology]{On natural homomorphisms of local cohomology modules}

%\date{\today}   % Standard: Datum der Kompilierung ("\today").
\address{Quaid-I-Azam University Islamabad, Lahore-Pakistan}%
\email{ waqassms$@$gmail.com}

\thanks{This research was partially supported by the Higher Education Commission, Pakistan}
\subjclass[2000]{Primary: 13D45}

\keywords{Local cohomology, Ext and Tor modules, Natural homomorphisms}

\begin{abstract}
Let $M$ be a non-zero finitely generated module over a finite dimensional commutative Noetherian local ring $(R,\mathfrak{m})$ with $\dim_R(M)=t$. Let $I$ be an ideal of $R$ with $\grade(I,M)=c$. In this article we will investigate several natural homomorphisms of local cohomology modules. The main purpose of this article is to investigate that the natural homomorphisms $\Tor^{R}_c(k,H^c_I(M))\to k\otimes_R M$ and $\Ext^{d}_R(k,H^c_I(M))\to  \Ext^{t}_R(k, M)$ are non-zero where $d:=t-c$. In fact for a Cohen-Macaulay module $M$ we will show that the homomorphism $\Ext^{d}_R(k,H^c_I(M))\to \Ext^{t}_R(k, M)$ is injective (resp. surjective) if and only if the homomorphism $H^{d}_{\mathfrak{m}}(H^c_{I}(M))\to H^t_{\mathfrak{m}}(M)$ is injective (resp. surjective) under the additional assumption of vanishing of Ext modules. The similar results are obtained for the homomorphism $\Tor^{R}_c(k,H^c_I(M))\to k\otimes_R M$. Moreover we will construct the natural homomorphism $\Tor^{R}_c(k, H^c_I(M))\to \Tor^{R}_c(k, H^c_J(M))$ for the ideals $J\subseteq I$ with $c = \grade(I,M)= \grade(J,M)$. There are several sufficient conditions on $I$ and $J$ to prove this homomorphism is an isomorphism.
\end{abstract}

\maketitle

\section{Introduction}
Let $I$ denote an ideal of a Noetherian local ring $(R,\mathfrak{m})$. We denote $H^i_I(R), i\in \mathbb{Z},$ the local cohomology modules of $R$ with respect to $I$. For the definition of local cohomology  we refer to \cite{goth} and \cite{b}. During the last few years many authors have investigated the natural homomorphism
$\mu: R\to \End_R(H^c_{I}(R))$. Firstly, in 2008, M. Hellus and J. St$\ddot{u}$ckrad (see \cite[Theorem 2.2]{he1}) have shown that this natural homomorphism is an isomorphism for a complete local ring and a cohomologically complete intersection ideal $I$. After that several authors generalized this idea to an arbitrary module.

Most recently the extension of this natural homomorphism was proven by the author and Z. Zahid to a finitely generated $R$-module and the canonical module (see \cite[Theorem 1.1]{waqas1} and \cite[Theorem 1.1]{z}). Moreover, for a complete local Gorenstein ring, in \cite[Theorems 1.1 and 4.4]{waqas} the author and P. Schenzel have discussed the injectivity and surjectivity of the homomorphism $\mu$ in terms of numerical invariants $\tau_{d,d}$, where $\tau_{i,j}$ is defined to be the socle dimension of $H^{i}_{\mathfrak{m}}(H^{n-j}_{I}(R))$. M. Hellus and P. Schenzel, in \cite[Conjecture 2.7]{pet1}, conjectured that the natural homomorphism $\Ext^d_R(k, H^c_I(R))\to k$ is non-zero. As a first step it was proven (see \cite[Theorem 6.2]{p5}) to be true for a regular local ring containing a field. In \cite[Theorem 4.4]{waqas} there is a relation between this conjecture and the above homomorphism $\mu$. One of our main results is a contribution to check the validity of this conjecture for Cohen-Macaulay modules. Moreover we are succeeded to characterize several interpretations such that the natural homomorphism $\Tor^{R}_c(k,H^c_I(M))\to k\otimes_R M$ is non-zero. In particular we prove the following result:

\begin{theorem}
Let $M$ be a non-zero finitely generated $R$-module of $\dim_R(M)= t$. Suppose that $\grade(I,M)= c$ for an ideal $I$. Then
\begin{itemize}
\item[(a)] The following conditions are equivalent:
\begin{itemize}
\item[(1)] The natural map $\Tor^{R}_c(k,H^c_I(M))\to k\otimes_R M$ is surjective and $\Tor_i^R(k,H^c_I(M))=0$ for all $i< c$.
\item[(2)] The natural map $\Hom_R(M,k)\to H^c_{\mathfrak m}(D(H^c_{I}(M)))$ is injective and $H^i_{\mathfrak m} (D(H^c_I(M)))= 0$ for all $i<  c$.
\item[(3)] The natural map $D(M)\to H^c_{\mathfrak m}(D(H^c_{I}(M)))$ is injective and $H^i_{\mathfrak m} (D(H^c_I(M)))= 0$ for all $i<  c$.
\end{itemize}
\item[(b)] If $M$ is Cohen-Macaulay and $d: = t-c$. Then the following conditions are equivalent:
\begin{itemize}
\item[(1)] The natural map $H^{d}_{\mathfrak{m}}(H^c_{I}(M))\to H^t_{\mathfrak{m}}(M)$ is injective and $H^i_{\mathfrak m} (H^c_I(M))= 0$ for all $i<  d$.
\item[(2)] The natural map $\Ext^{d}_R(k,H^c_I(M))\to \Ext^{t}_R(k, M)$ is injective and $\Ext^i_R(k,H^c_I(M))=0$ for all $i< d$.
\end{itemize}
\end{itemize}
\end{theorem}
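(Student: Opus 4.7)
The plan is to treat both parts uniformly via the composition identity $R\Hom_R(k, -) \cong R\Hom_R(k, R\Gamma_{\mathfrak m}(-))$, valid because $k$ is $\mathfrak m$-torsion, which yields the Grothendieck spectral sequence
\[
E_2^{p,q} = \Ext^p_R(k, H^q_{\mathfrak m}(X)) \Rightarrow \Ext^{p+q}_R(k, X).
\]
Two observations drive the whole argument. First, for any $R$-module $X$ and any integer $s\ge 0$, the vanishing $\Ext^i_R(k, X) = 0$ for $i < s$ is equivalent to $H^i_{\mathfrak m}(X) = 0$ for $i < s$, and under these vanishings the edge map yields an isomorphism $\Ext^s_R(k, X) \cong \Hom_R(k, H^s_{\mathfrak m}(X))$. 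The forward implication is immediate from the spectral sequence collapsing in column $q=s$; the reverse is an induction on $s$ using the fact that a nonzero $\mathfrak m$-torsion module has nonzero socle, so $\Hom_R(k, H^j_{\mathfrak m}(X))=0$ forces $H^j_{\mathfrak m}(X)=0$. Second, by the same socle principle, for any $\mathfrak m$-torsion module $N$ and any $R$-linear map $\phi\colon N\to L$, the map $\phi$ is injective iff $\Hom_R(k,\phi)$ is.

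For part (b), I apply these observations to $X = H^c_I(M)$. The Cohen-Macaulay hypothesis gives $R\Gamma_{\mathfrak m}(M)\simeq H^t_{\mathfrak m}(M)[-t]$ in $D(R)$, hence $\Ext^i_R(k,M)=0$ for $i<t$ and $\Ext^t_R(k,M)\cong\Hom_R(k,H^t_{\mathfrak m}(M))$. Since $k$ is $\mathfrak m$-torsion and $\mathfrak m\supseteq I$, one has $R\Hom_R(k,R\Gamma_I(M))\simeq R\Hom_R(k,M)$, so applying $R\Gamma_{\mathfrak m}$ and $R\Hom_R(k,-)$ to the canonical truncation $H^c_I(M)[-c]\to R\Gamma_I(M)$ (available because $H^i_I(M)=0$ for $i<c$) yields respectively, in cohomological degree $t$, the two natural maps $H^d_{\mathfrak m}(H^c_I(M))\to H^t_{\mathfrak m}(M)$ and $\Ext^d_R(k,H^c_I(M))\to\Ext^t_R(k,M)$ of the statement. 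By the first observation the vanishing halves of (1) and (2) are equivalent, and under them the edge identification makes the Ext map the image of the local-cohomology map under $\Hom_R(k,-)$; the second observation then gives equivalence of the two injectivity statements.

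For part (a), apply the exact Matlis functor $D$ to the map of (1); via $D(k\otimes_R M)\cong\Hom_R(k,D(M))=\Hom_R(M,k)$ and $D(\Tor_i^R(k,N))\cong\Ext^i_R(k,D(N))$ this is converted into: $\Hom_R(M,k)\to\Ext^c_R(k,D(H^c_I(M)))$ is injective and $\Ext^i_R(k,D(H^c_I(M)))=0$ for $i<c$. Applying the first observation now to $X=D(H^c_I(M))$ and the same edge argument as in (b) identifies this statement with (2), establishing (1)$\iff$(2). Finally, (3)$\Rightarrow$(2) is immediate since $\Hom_R(M,k)$ is the socle of $D(M)$, and (2)$\Rightarrow$(3) follows because the finitely generated module $M$ has Artinian Matlis dual, so $D(M)$ is an essential extension of its socle $\Hom_R(M,k)$ and any $R$-linear map out of $D(M)$ whose restriction to the socle is injective must itself be injective.

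The principal obstacle I expect is the careful identification of the natural maps in the statement with those produced by applying $R\Gamma_{\mathfrak m}$, $R\Hom_R(k,-)$, and $D$ to the truncation $H^c_I(M)[-c]\to R\Gamma_I(M)$ together with the structure map $R\Gamma_I(M)\to M$; once these compatibilities are nailed down (together with the edge identifications from the spectral sequence), the remainder is a coordinated application of the two observations above.
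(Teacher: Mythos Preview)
Your proposal is correct and follows essentially the same route as the paper. Both arguments rest on the same two pillars: the equivalence ``$\Ext^i_R(k,X)=0$ for $i<s$ iff $H^i_{\mathfrak m}(X)=0$ for $i<s$, with edge isomorphism $\Ext^s_R(k,X)\cong\Hom_R(k,H^s_{\mathfrak m}(X))$'' (the paper's Lemma~2.3), and the socle principle that an $\mathfrak m$-torsion module with vanishing socle is zero. For part~(a) both you and the paper pass through Matlis duality and the identification $D(\Tor_i^R(k,-))\cong\Ext^i_R(k,D(-))$; for part~(b) both compare $\varphi_1$ and $\varphi_2$ via the commutative square linking them through $\Hom_R(k,-)$.

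The only visible difference is in packaging. For (b)(2)$\Rightarrow$(1) the paper explicitly names the kernel of $\varphi_1$ as $H^{t-1}_{\mathfrak m}(C^{\cdot}_M(I))$ via the truncation complex and then kills its socle, whereas you invoke the cleaner abstract statement that a map out of an $\mathfrak m$-torsion module is injective iff it is injective on socles. Likewise for (a)(2)$\Rightarrow$(3) the paper writes out $N=\ker\varphi_7$ and shows $\Hom_R(k,N)=0$, while you phrase the same step as ``$D(M)$ is Artinian, hence an essential extension of its socle.'' These are the same arguments said two ways; your formulation is a bit more streamlined, and your caveat about verifying the compatibility of the various edge/truncation maps with the paper's $\varphi_i$ is exactly the bookkeeping the paper carries out via its commutative diagrams.
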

Here $D(\cdot)$ denotes the Matlis dual functor. The existence of these natural maps is shown in Propositions \ref{2.4} and \ref{9}. With the additional assumption of $\inj \dim_R(H^c_{I}(M))\leq d$ we succeed to prove that the natural homomorphism $H^{d}_{\mathfrak{m}}(H^c_{I}(M))\to H^t_{\mathfrak{m}}(M)$ is surjective resp. injective (see Theorem \ref{4.4}). This actually generalizes the result proven in case of a complete local Gorenstein ring in \cite[Theorem 4.4]{waqas}. The similar results are obtained for the homomorphism $E\to H^c_{\mathfrak m}(D(H^c_{I}(R)))$ (see Theorem \ref{111}) where $E= E_R(k)$ is the injective hull of $k$.

Another part of our investigation is to derive the natural homomorphism
\[
\Tor^{R}_c(k, H^c_I(M))\to \Tor^{R}_c(k, H^c_J(M))
\]
for two ideals $J\subseteq I$ of $R$ such that $\grade(I,M)= \grade(J,M)= c$ (see Proposition \ref{12}). Note that there is a necessary condition such that this natural homomorphism is an isomorphism. As a consequence of this we are able to prove the following result:
\begin{theorem}\label{d}
Let $M$ be a non-zero finitely generated $R$-module such that $c = \grade(I,M)= \grade(J,M)$ where $J\subseteq I\subseteq R$ are two ideals. Suppose that $\Rad IR_{\mathfrak{p}}= \Rad JR_{\mathfrak{p}}$ for all ${\mathfrak{p}}\in V(J)\cap \Supp_R(M)$ such that $\depth_{R_\mathfrak{p}} (M_{\mathfrak{p}})\leq c$. Assume in addition that $H^i_J(M)= 0$ for all $i\neq c$. Then the natural homomorphism
\[
\Tor^{R}_c(k,H^c_I(M))\to k\otimes_R M
\]
is an isomorphism.
\end{theorem}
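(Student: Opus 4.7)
The strategy is to factor the natural map of the theorem as
\[
\Tor^{R}_c(k,H^c_I(M))\xrightarrow{\,\varphi\,}\Tor^{R}_c(k,H^c_J(M))\xrightarrow{\,\psi\,}k\otimes_R M,
\]
where $\varphi$ is produced by Proposition \ref{12} and $\psi$ is the natural map of Proposition \ref{2.4} applied to $J$ (note $\grade(J,M)=c$ by hypothesis). A functoriality check identifies $\psi\circ\varphi$ with the natural map in the statement, so it is enough to show that each of $\varphi$ and $\psi$ is an isomorphism.

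\textbf{The map $\psi$.} Since $H^i_J(M)=0$ for all $i\neq c$, one has $R\Gamma_J(M)\simeq H^c_J(M)[-c]$ in the derived category. Embed this in the canonical triangle
\[
R\Gamma_J(M)\longrightarrow M\longrightarrow T\longrightarrow R\Gamma_J(M)[1],
\]
so that $R\Gamma_J(T)=0$. Because $\mathfrak m\supseteq J$, we have $R\Gamma_{\mathfrak m}\simeq R\Gamma_{\mathfrak m}\circ R\Gamma_J$, whence $R\Gamma_{\mathfrak m}(T)=0$. Tensoring the triangle $R\Gamma_{\mathfrak m}(R)\to R\to \check C\to R\Gamma_{\mathfrak m}(R)[1]$ by $T$---where $\check C$ is the shifted \v{C}ech complex on generators of $\mathfrak m$, a bounded complex of localisations $R[1/y]$ with $y\in\mathfrak m$---yields $T\simeq \check C\otimes^{\mathrm L}_R T$. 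Since $k\otimes_R R[1/y]=0$ for any $y\in\mathfrak m$, we have $k\otimes_R\check C=0$ and therefore $k\otimes^{\mathrm L}_R T=0$. Consequently $k\otimes^{\mathrm L}_R R\Gamma_J(M)\simeq k\otimes^{\mathrm L}_R M$, and taking $H^0$ produces $\psi$ as an isomorphism.

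\textbf{The map $\varphi$.} Since $V(I)\subseteq V(J)$ we have $R\Gamma_I\simeq R\Gamma_I\circ R\Gamma_J$; combined with $R\Gamma_J(M)\simeq H^c_J(M)[-c]$ this gives $R\Gamma_I(M)\simeq R\Gamma_I(H^c_J(M))[-c]$, and the natural map $H^c_I(M)\to H^c_J(M)$ identifies with the inclusion $\Gamma_I(H^c_J(M))\hookrightarrow H^c_J(M)$. Let $C$ be its cokernel. For $\mathfrak p\not\in V(J)\cap\Supp_R(M)$, both $H^c_I(M)_{\mathfrak p}$ and $H^c_J(M)_{\mathfrak p}$ vanish; for $\mathfrak p\in V(J)\cap\Supp_R(M)$ with $\depth_{R_{\mathfrak p}}(M_{\mathfrak p})\le c$, the radical hypothesis yields $H^c_{I_{\mathfrak p}}(M_{\mathfrak p})=H^c_{J_{\mathfrak p}}(M_{\mathfrak p})$ and hence $C_{\mathfrak p}=0$. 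Therefore
\[
\Supp_R(C)\subseteq \bigl\{\mathfrak p\in V(J)\cap\Supp_R(M)\ :\ \depth_{R_{\mathfrak p}}(M_{\mathfrak p})>c\bigr\}.
\]
The long exact Tor sequence of $0\to H^c_I(M)\to H^c_J(M)\to C\to 0$ reduces the claim that $\varphi$ is an isomorphism to the vanishings $\Tor^R_c(k,C)=\Tor^R_{c+1}(k,C)=0$.

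\textbf{Main obstacle.} This last Tor-vanishing is the delicate step, as $C$ is not finitely generated. My plan is to invoke the hyperTor spectral sequence
\[
E_2^{p,q}=\Tor^R_{-p}(k,H^q_I(M))\Longrightarrow H^{p+q}(k\otimes^{\mathrm L}_R R\Gamma_I(M)),
\]
whose abutment, by the argument that established $\psi$ applied now with $I$ in place of $J$, equals $\Tor^R_{-(p+q)}(k,M)$. A standard edge-map analysis reduces the claim for $\varphi$ to the vanishings $\Tor^R_{c+j}(k,H^{c+j}_I(M))=0$ for $j\ge 1$, together with the analogous vanishings for the incoming $d_r$-differentials. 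Under the radical hypothesis combined with $H^i_J(M)=0$ for $i\neq c$, each $H^{c+j}_I(M)$ with $j\ge 1$ is supported only on primes of depth exceeding $c$ (by the same localisation argument used for $C$); converting this support bound into the required Tor-vanishing---presumably via Matlis duality, translating the Tor into Ext on the dual and invoking a depth bound for the dual in the spirit of Theorem 1.1---is the technical heart of the argument.
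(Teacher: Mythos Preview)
Your overall strategy---factoring through $\Tor^R_c(k,H^c_J(M))$---is exactly the paper's, and your argument for $\psi$ is a correct derived-category repackaging of Corollary~\ref{10}(3). The gap is entirely in the $\varphi$ step: what you label the ``main obstacle'' is not a proof but a sketch, and the spectral-sequence detour at the end never actually establishes the needed Tor-vanishing for $C$ (or for the higher $H^{c+j}_I(M)$). Support bounds of the type you derive do not by themselves yield $\Tor$-vanishing for non-finitely-generated modules, and the appeal to ``a depth bound for the dual in the spirit of Theorem~1.1'' is too vague to be checked.

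The paper bypasses this difficulty by showing that the cokernel $C$ is in fact \emph{zero}: under the radical hypothesis one has $\Ext^c_R(I^\alpha/J^\alpha,M)=0$ for every $\alpha\ge 1$, whence the exact sequence
\[
0\to H^c_I(M)\to H^c_J(M)\to \varinjlim_\alpha \Ext^c_R(I^\alpha/J^\alpha,M)
\]
forces $H^c_I(M)\cong H^c_J(M)$ (this is Proposition~\ref{12}(2)). The point is that each $I^\alpha/J^\alpha$ is \emph{finitely generated}, so the vanishing of $\Ext^c_R(I^\alpha/J^\alpha,M)$ is a grade statement that can be checked locally on associated primes; the radical hypothesis on primes of small depth is precisely what makes this go through (the paper cites \cite[Theorem~4.1(b)]{z} for the computation). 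Once $C=0$, your long exact Tor sequence gives $\varphi$ an isomorphism for free, and no spectral sequence or Matlis-duality gymnastics are needed. You were working much too hard: rather than controlling Tor of the non-finite cokernel $C$, go back to the finite approximations $I^\alpha/J^\alpha$ and kill them at the Ext level before passing to the limit.
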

Note that the last Theorem \ref{d} is actually the dual statement of one the main results of \cite[Theorem 1.2]{p7} and \cite[Theorem 1.2]{z}.

\section{Preliminaries}
Throughout this paper we will denote $(R,\mathfrak m)$ by a commutative Noetherian local ring with $\dim(R)=n$ and $k= R/{\mathfrak m}$ its residue field. Let $D(\cdot):= \Hom_R(\cdot, E)$ denote the Matlis dual functor, where $E= E_R(k)$ is the injective hull of $k$. For the basic results of homological algebra of complexes we refer to \cite{w} and \cite{har1}. Moreover in the following we make some notation which will be used in the whole article.

\begin{notation}
$(1)$ The symbol $"\cong"$ indicates the isomorphisms between modules.

$(2)$ Let $X\to Y$ be a morphism of complexes of $R$-modules. Then it is called quasi-isomorphism, i.e. homologically isomorphism, if it induces the isomorphism between the homologies of $X$ and $Y$. In this case we will write it as $X \qism Y$.
\end{notation}
Firs of all note that the following version of Local Duality was already derived in \cite[Lemmma 3.1]{waqas2}. For a generalization to arbitrary cohomologically complete intersection ideals we refer to \cite[Theorem 6.4.1]{he}, \cite[Lemma 2.4]{waqas1} or \cite[Theorem 3.1]{p1}. We include it here for our convenience.
\begin{lemma} \label{2.1}
Let $I$ be an ideal of a Cohen-Macaulay ring $R$ with $\dim(R)= n$. Then for any $R$-module $M$ and for all $i\in \mathbb{Z}$ we have
\begin{itemize}
\item[(1)] $\Tor_{n-i}^R(M, H^n_{\mathfrak{m}}(R)) \cong H^i_{\mathfrak{m}}(M)$.
\item[(2)] $D(H^i_\mathfrak{m}(M)) \cong \Ext^{n-i}_R(M, D(H^n_{\mathfrak{m}}(R)))$.
\end{itemize}
\end{lemma}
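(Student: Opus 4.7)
My plan is to prove both statements by exploiting the Čech complex as a flat resolution of $H^n_{\mathfrak m}(R)$. Fix a system of parameters $\xx = x_1, \ldots, x_n$ of $R$; because $R$ is Cohen-Macaulay of dimension $n$, $\xx$ is an $R$-regular sequence and hence $H^i_{\mathfrak m}(R) = 0$ for every $i \neq n$. The Čech complex $\Cech$ is a bounded complex of flat $R$-modules whose $i$-th cohomology is $H^i_{\mathfrak m}(R)$, so after an appropriate shift it becomes a flat resolution of $H^n_{\mathfrak m}(R)$ regarded as a module in cohomological degree $0$.

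For part $(1)$, the plan is to compute the derived tensor product through this resolution:
\[
M \lo H^n_{\mathfrak m}(R) \qism M \otimes_R \Cech[n].
\]
The cohomology in degree $-i$ of the left-hand side is $\Tor^R_i(M, H^n_{\mathfrak m}(R))$, while on the right it equals $H^{n-i}(M \otimes_R \Cech) = H^{n-i}_{\mathfrak m}(M)$, since the Čech complex on $\xx$ computes local cohomology with respect to $\mathfrak m$. The substitution $i \mapsto n-i$ then yields the asserted $\Tor^R_{n-i}(M, H^n_{\mathfrak m}(R)) \cong H^i_{\mathfrak m}(M)$.

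For part $(2)$, I would apply the Matlis dual functor $D = \Hom_R(\cdot, E)$ to the isomorphism established in $(1)$ and invoke the Hom-Tor adjunction
\[
\Hom_R(\Tor^R_j(M, N), E) \cong \Ext^j_R(M, \Hom_R(N, E)),
\]
which is valid because $E$ is injective. Specialising to $j = n-i$ and $N = H^n_{\mathfrak m}(R)$ converts $(1)$ into $D(H^i_{\mathfrak m}(M)) \cong \Ext^{n-i}_R(M, D(H^n_{\mathfrak m}(R)))$, which is exactly the statement of $(2)$.

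The only step that truly requires attention is bookkeeping between the homological indexing of $\Tor$ and the cohomological indexing of the Čech complex; once that shift is correctly absorbed, the remainder is a mechanical application of the Čech resolution together with the injectivity of $E$. This explains why the argument appears in the cited references in condensed form, and why no deeper input beyond standard homological algebra is needed here.
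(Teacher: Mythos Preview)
Your argument is correct and is essentially the standard proof of this form of local duality: the Cohen--Macaulay hypothesis forces $H^i_{\mathfrak m}(R)=0$ for $i\neq n$, so the shifted \v{C}ech complex $\Cech[n]$ is a bounded flat resolution of $H^n_{\mathfrak m}(R)$, and tensoring with $M$ computes both $\Tor$ and $H^{\bullet}_{\mathfrak m}(M)$ simultaneously; part $(2)$ then follows by Matlis duality and the $\Hom$--$\Tor$ adjunction. Note, however, that the paper does not supply its own proof of this lemma at all---it merely cites \cite[Lemma~3.1]{waqas2} (and related references \cite{he}, \cite{waqas1}, \cite{p1}) and records the statement for later use---so there is nothing in the paper to compare against beyond observing that your approach is the expected one underlying those citations.
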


In the next we need a result which is actually proved in \cite[Proposition 1.4]{pet1} (see also \cite[Lemma 2.5]{zr} for a slight generalization). Moreover the elementary proof of the following Lemma is given in \cite[Proposition 2.6]{waqas2}.
\begin{lemma} \label{2.3}
Suppose that $X$ is an arbitrary $R$-module. Then
$H^i_\mathfrak{m} (X)= 0$ for all $i< s$ if and only if $\Ext^i_R(k,X)= 0$ for all $i< s$ where $s\in \mathbb{N}$. Moreover if one of these equivalent conditions holds, then there is an isomorphism
\[
\Hom_R(k,H^s_\mathfrak{m} (X))\cong \Ext^s_R(k, X).
\]
\end{lemma}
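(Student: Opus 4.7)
The plan is to deduce Lemma~\ref{2.3} from the Grothendieck composition spectral sequence. Since any $R$-linear map $k \to Y$ is automatically $\mathfrak{m}$-torsion, one has the functorial identity $\Hom_R(k,-) \cong \Hom_R(k,-) \circ \Gamma_\mathfrak{m}(-)$. The functor $\Gamma_\mathfrak{m}$ preserves injectives (every injective $R$-module is a direct sum of modules $E_R(R/\mathfrak{p})$, and $\Gamma_\mathfrak{m}$ selects precisely the $E_R(k)$-summands, which remain injective). Grothendieck's composition theorem therefore provides a convergent spectral sequence
\[
E_2^{p,q} = \Ext^p_R\bigl(k, H^q_\mathfrak{m}(X)\bigr) \Rightarrow \Ext^{p+q}_R(k, X).
\]

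Assume first that $H^i_\mathfrak{m}(X) = 0$ for all $i < s$. Then $E_2^{p,q}=0$ whenever $q < s$. For any $i < s$ and any $(p,q)$ with $p+q=i$, we have $q \le i < s$, hence $E_\infty^{p,q} = 0$; the filtration on $\Ext^i_R(k,X)$ has only trivial subquotients, giving the vanishing $\Ext^i_R(k,X)=0$ for $i<s$. In total degree $s$, the only possibly non-zero subquotient is $E_\infty^{0,s}$. The incoming differentials $d_r : E_r^{-r,\,s+r-1} \to E_r^{0,s}$ vanish for $r \ge 2$ since $-r < 0$, and the outgoing differentials $d_r : E_r^{0,s} \to E_r^{r,\,s-r+1}$ vanish for $r \ge 2$ since $s-r+1 < s$ makes $E_2^{r,\,s-r+1} = 0$. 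Thus $E_\infty^{0,s} = E_2^{0,s}$, yielding the asserted isomorphism
\[
\Ext^s_R(k, X) \;\cong\; \Hom_R\bigl(k, H^s_\mathfrak{m}(X)\bigr).
\]
This settles the forward implication and the ``moreover'' clause simultaneously.

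For the converse I would induct on $s$. The case $s=0$ is vacuous. Assuming the implication known for $s$, suppose that $\Ext^i_R(k,X)=0$ for all $i<s+1$. In particular it holds for $i<s$, so by the inductive hypothesis $H^i_\mathfrak{m}(X)=0$ for $i<s$. Applying the isomorphism already established (in degree $s$ rather than $s+1$) gives $\Ext^s_R(k,X) \cong \Hom_R(k, H^s_\mathfrak{m}(X))$. The left-hand side is zero by assumption, so the socle of the $\mathfrak{m}$-torsion module $H^s_\mathfrak{m}(X)$ is trivial; an $\mathfrak{m}$-torsion module with zero socle is itself zero, so $H^s_\mathfrak{m}(X)=0$, completing the induction.

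The only genuinely delicate point I anticipate is justifying the spectral sequence when $X$ is an arbitrary (possibly non-finitely-generated) $R$-module. This is why it is crucial that $\Gamma_\mathfrak{m}$ sends injectives to injectives: one applies Grothendieck's construction to a Cartan--Eilenberg injective resolution of $X$, with no finiteness hypothesis required. Once this is in place, the rest of the proof is formal, and the structural observation driving everything is that the edge of the spectral sequence at $(0,s)$ is insulated from differentials exactly because of the hypothesized vanishing below degree $s$.
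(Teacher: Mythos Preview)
Your argument is correct. The paper itself does not give a proof of this lemma but merely cites \cite[Proposition 1.4]{pet1} (and mentions an elementary proof in \cite[Proposition 2.6]{waqas2}), so there is no in-paper proof to compare against. Your use of the Grothendieck spectral sequence for the composite $\Hom_R(k,-)\cong \Hom_R(k,\Gamma_{\mathfrak m}(-))$ is the standard route and is exactly the kind of argument underlying the cited references; the edge analysis at $(0,s)$ and the inductive converse via the socle of the $\mathfrak m$-torsion module $H^s_{\mathfrak m}(X)$ are both handled cleanly.
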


\begin{proof}
See \cite[Proposition 1.4]{pet1}.
\end{proof}

At the end of this section let us recall the definition of the truncation complex. Note that the idea of the truncation complex was firstly given by P. Schenzel (see \cite[Definition 4.1]{p2}). Let $M$ be a finitely generated $R$-module with $\dim_R(M)= t$. Assume that $E^{\cdot}_R(M)$ denotes the minimal injective resolution of $M.$ Then it is easy to see that $\Gamma_{I}(E_R(R/{\mathfrak p}))= E_R(R/{\mathfrak p})$ for all $\mathfrak p\in V(I)$ and zero otherwise. Here $\Gamma_I(-)$ denotes the section functor with support in $I$.

\begin{definition} \label{2.2}
Let $C^{\cdot}_R(I)$ be the cokernel of the embedding of the complexes of $R$-modules
$H^c_I(M)[-c]\to \Gamma_I(E^{\cdot}_R(M))$. This said to be the
truncation complex of $R$ with respect to $I$.
\end{definition}
Note that there ia a short exact sequence of complexes of $R$-modules
\[
0\to H^c_I(M)[-c]\to \Gamma_I(E^{\cdot}_R(M))\to C^{\cdot}_M(I)\to 0.
\]
Then the long exact sequence of cohomologies of this sequence induces that $H^i(C^{\cdot}_M(I))= 0$ for all $i\leq c$ or $i> n$ and $H^i(C^{\cdot}_M(I))\cong H^i_I(M)$ for all $c< i\leq n.$

\section{Natural homomorphisms}
The truncation complex is useful to construct the several natural homomorphisms in the following Proposition. Note that one can also derive these natural homomorphisms as edge homomorphisms of certain spectral sequences. But here for our convenience we will give the elementary proof. Because of we need these construction in the sequel of this paper. Note that the natural homomorphism $\varphi_1$ of the next Proposition is already proved in \cite[Lemma 4.2]{p1}.

\begin{proposition} \label{2.4}
Let $0\neq M$ be a finitely generated $R$-module of $\dim(R)=n$ and $\dim_R(M)= t$. Let $I$ be an ideal with $\grade(I,M)=c$  and $d:= t-c$. Then
\begin{itemize}
\item[(a)] There are the natural homomorphisms:
\begin{itemize}
\item[(1)] $\varphi_1: H^{d}_{\mathfrak{m}}(H^c_{I}(M))\to H^t_{\mathfrak{m}}(M),$
\item[(2)] $\varphi_2: \Ext^{d}_R(k,H^c_I(M))\to \Ext^{t}_R(k, M),$
\item[(3)] $\varphi_3: \Tor^{R}_c(k,H^c_I(M))\to k\otimes_R M.$
\end{itemize}
\item[(b)] Suppose that $R$ is Cohen-Macaulay. Then there are the natural homomorphisms:
\begin{itemize}
\item[(1)] $\varphi_4: \Tor_{n-d}^R(H^n_\mathfrak{m}(R),H^c_I(M))\to H^t_{\mathfrak{m}}(M),$
\item[(2)] $\varphi_5: D(H^t_{\mathfrak{m}}(M))\to \Ext^{n-d}_R(H^c_I(M),D(H^n_\mathfrak{m}(R)))$.
\end{itemize}
\end{itemize}
\end{proposition}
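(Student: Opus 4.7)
The uniform strategy is to apply three different functors to the short exact sequence of complexes
\[
0 \to H^c_I(M)[-c] \to \Gamma_I(E^{\cdot}_R(M)) \to C^{\cdot}_M(I) \to 0
\]
supplied by the truncation complex of Definition \ref{2.2} and, in each case, to read off the asserted map as an edge homomorphism of the resulting long exact cohomology sequence. Throughout I would use that $\Gamma_I(E^{\cdot}_R(M))$ is a model for $\mathrm{R}\Gamma_I(M)$ and that $H^i(C^{\cdot}_M(I))=0$ for $i\leq c$, which is precisely what makes the connecting maps land in the intended modules.

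For $\varphi_1$, apply $\mathrm{R}\Gamma_{\mathfrak m}$; since $I\subseteq \mathfrak m$, the canonical identification $\mathrm{R}\Gamma_{\mathfrak m}\circ\mathrm{R}\Gamma_I\simeq \mathrm{R}\Gamma_{\mathfrak m}$ turns the middle term into $\mathrm{R}\Gamma_{\mathfrak m}(M)$, so in cohomological degree $t=c+d$ the shifted left term contributes $H^d_{\mathfrak m}(H^c_I(M))$ and the middle term contributes $H^t_{\mathfrak m}(M)$; the edge map is $\varphi_1$. For $\varphi_2$, I would replace $\mathrm{R}\Gamma_{\mathfrak m}$ by $\mathrm{R}\Hom_R(k,-)$ and use the analogous quasi-isomorphism $\mathrm{R}\Hom_R(k,\mathrm{R}\Gamma_I(M))\simeq \mathrm{R}\Hom_R(k,M)$, which holds because $k$ is already $I$-torsion as $\Supp(k)\subseteq V(I)$; extraction in degree $t$ then yields the map from $\Ext^d_R(k,H^c_I(M))$ to $\Ext^t_R(k,M)$. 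For $\varphi_3$, apply $k\otimes^{\mathrm L}_R(-)$ and read off the edge map in degree $0$: the shifted left term gives $\Tor^R_c(k,H^c_I(M))$, while the middle term admits the canonical comparison $H^0(k\otimes^{\mathrm L}_R\mathrm{R}\Gamma_I(M))\to H^0(k\otimes^{\mathrm L}_R M)=k\otimes_R M$ induced by the natural transformation $\mathrm{R}\Gamma_I\to\operatorname{id}$, and $\varphi_3$ is the composition.

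For (b), once $\varphi_1$ is in hand both maps are obtained by transporting it through Lemma \ref{2.1}. Its first part, applicable because $R$ is Cohen-Macaulay, supplies natural isomorphisms $\Tor^R_{n-i}(H^n_{\mathfrak m}(R),N)\cong H^i_{\mathfrak m}(N)$; taking $N=H^c_I(M)$ with $i=d$ and $N=M$ with $i=t$ identifies $\varphi_1$ with a map of the shape required for $\varphi_4$. The second part of Lemma \ref{2.1} provides $D(H^i_{\mathfrak m}(N))\cong \Ext^{n-i}_R(N,D(H^n_{\mathfrak m}(R)))$; applying this to $N=M$ and $N=H^c_I(M)$ lets me read off $D(\varphi_1)$ as the map $\varphi_5$ of the claimed form.

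The only real care needed is the shift bookkeeping together with the naturality of the quasi-isomorphisms $\mathrm{R}\Gamma_{\mathfrak m}\mathrm{R}\Gamma_I\simeq\mathrm{R}\Gamma_{\mathfrak m}$ and $\mathrm{R}\Hom_R(k,\mathrm{R}\Gamma_I(-))\simeq\mathrm{R}\Hom_R(k,-)$, so that the edge maps emerging from the long exact sequences really acquire the stated source and target modules. The one additional subtlety, present only in the construction of $\varphi_3$, is that $k\otimes^{\mathrm L}_R\mathrm{R}\Gamma_I(M)\to k\otimes^{\mathrm L}_R M$ is not an isomorphism in general and has to be inserted as an explicit second step; but its existence and naturality are both immediate from the natural transformation $\mathrm{R}\Gamma_I\to\operatorname{id}$, so this is the only genuine, albeit minor, obstacle.
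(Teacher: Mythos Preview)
Your approach is essentially the paper's, phrased in derived-category language where the paper works with explicit \v{C}ech complexes and free/injective resolutions; the truncation complex followed by the three functors $\mathrm{R}\Gamma_{\mathfrak m}$, $\mathrm{R}\Hom_R(k,-)$, $k\otimes^{\mathrm L}_R(-)$ and the transport via Lemma~\ref{2.1} for part~(b) is exactly what the paper does.

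There is one point where you sell yourself short. You say that for $\varphi_3$ the comparison $k\otimes^{\mathrm L}_R\mathrm{R}\Gamma_I(M)\to k\otimes^{\mathrm L}_R M$ ``is not an isomorphism in general'' and must be inserted as an extra step. In fact it \emph{is} a quasi-isomorphism, for precisely the reason you give in your treatment of $\varphi_2$: since $\Supp_R(k)\subseteq V(\mathfrak m)\subseteq V(I)$, one has $\mathrm{R}\Gamma_I(k)\simeq k$, and hence (modelling $\mathrm{R}\Gamma_I$ by the \v{C}ech complex $\check C_{\underline y}$ and resolving $M$ freely) $k\otimes^{\mathrm L}_R\mathrm{R}\Gamma_I(M)\simeq k\otimes^{\mathrm L}_R M$. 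The paper carries this out explicitly: it shows $F_\cdot^R(k)\otimes_R\check C_{\underline y}\qism k$, then tensors with a free resolution $L_\cdot^R$ of $M$ to obtain $F_\cdot^R(k)\otimes_R\Gamma_I(E^\cdot_R(M))\qism k\otimes_R L_\cdot^R$, whose $H^0$ is exactly $k\otimes_R M$. So your ``explicit second step'' is in fact an isomorphism, your composite agrees with the paper's $\varphi_3$, and there is no genuine obstacle at all---the three constructions in (a) are completely parallel.
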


\begin{proof}
Firstly we prove statement in $(a)$. To do this let $\underline{x}= x_1, \ldots ,x_s\in \mathfrak{m}$ with $\Rad \mathfrak{m}= \Rad(\underline{x})R$. We consider the \v{C}ech complex $\Check{C}_{\underline{x}}$ with respect to $\underline{x}$. Now apply $\cdot \otimes_R \Check{C}_{\underline{x}}$ to the short exact sequence of the truncation complex. Then the resulting sequence of complexes remains exact because $\Check{C}_{\underline{x}}$ is a bounded complex of flat $R$-modules. That is there is the following short exact sequence of complexes
\begin{equation}\label{1a}
0\to (\Check{C}_{\underline{x}}\otimes_{R} H^c_{I}(M))[-c]\to \Check{C}_{\underline{x}}\otimes_{R} \Gamma_I(E^{\cdot}_R(M))\to \Check{C}_{\underline{x}}\otimes_{R} C^{\cdot}_M(I)\to 0.
\end{equation}
But the complex in the middle is quasi-isomorphic to the following complex
\[
 \Gamma_{\mathfrak m}(\Gamma_I(E^{\cdot}_R(M)\cong \Gamma_{\mathfrak m}(E^{\cdot}_R(M))
\]
(see \cite[Theorem 3.2]{pet2}). Recall that $\Gamma_I(E^{\cdot}_R(M))$ is a complex of injective $R$-modules and $\Gamma_{\mathfrak m}(\Gamma_I(\cdot))= \Gamma_{\mathfrak m}(\cdot)$. Then the long exact sequence of cohomologies of the last sequence, concentrated in homological degree $t$, induces the following natural map
\[
H^{d}_{\mathfrak{m}}(H^c_{I}(M))\to H^t_{\mathfrak{m}}(M)
\]
where $d:= t-c$. This gives the existence of the natural homomorphism of $(1)$.

Now let $F_{\cdot}^R(k)$ be a free resolution of $k$. Apply the functor $\Hom_R(F_{\cdot}^R(k), .)$ to the short exact sequence of the truncation complex. Since $F_{\cdot}^R(k)$ is a right bounded complex of finitely generated free $R$-modules. Then it induces the following short exact sequences of complexes of $R$-modules
\begin{equation}\label{a}
0\rightarrow \Hom_R(F_{\cdot}^R(k), H^c_I(M))[-c]\rightarrow \Hom_R(F_{\cdot}^R(k), \Gamma_I(E^{\cdot}_R(M)))\rightarrow \Hom_R(F_{\cdot}^R(k), C^{\cdot}_M(I))\rightarrow 0.
\end{equation}
Moreover $\Hom_R(F_{\cdot}^R(k), \Gamma_I(E^{\cdot}_R(M))) \qism \Gamma_I(\Hom_R(F_{\cdot}^R(k), E^{\cdot}_R(M)))$. By \cite[Theorem 1.1]{pet2} the last complex is quasi-isomorphic to $\Check{C}_{\underline{y}}\otimes_{R} \Hom_R(F^{\cdot}_R(k), E^{\cdot}_R(M))$ where $\underline{y}= y_1, \ldots ,y_r\in I$ such that $\Rad I= \Rad(\underline{y})R$. Here we use that any $R$-module of the complex $\Hom_R(F^{\cdot}_R(k), E^{\cdot}_R(M))$ is injective.

Note that there is the following quasi-isomorphism
\[
\Check{C}_{\underline{y}}\otimes_{R} \Hom_R(k, E^{\cdot}_R(M)) \qism \Check{C}_{\underline{y}}\otimes_{R} \Hom_R(F^{\cdot}_R(k), E^{\cdot}_R(M)).
\]
But the complex on the left side is isomorphic to $\Hom_R(k, E^{\cdot}_R(M))$. This is true because of each $R$-module of the complex $\Hom_R(k, E^{\cdot}_R(M))$ has support in $V(\mathfrak{m})$. Therefore our middle complex of the exact sequence \ref{a} is quasi-isomorphic to $\Hom_R(k, E^{\cdot}_R(M))$. Then the long exact sequence of cohomologies of \ref{a} , in degree $t$, induces the required homomorphisms of $(2)$.

Now by tensoring with $F_{\cdot}^R(k)$ to the truncation complex induces
the following exact sequence of complexes of $R$-modules
\begin{equation}\label{11a}
0\to (F_{\cdot}^R(k)\otimes_{R} H^c_I(M))[-c]\to F_{\cdot}^R(k)\otimes_{R}\Gamma_I(E^{\cdot}_R(M))\to F_{\cdot}^R(k)\otimes_{R} C^{\cdot}_M(I)\to 0.
\end{equation}
Because of \cite[Theorem 3.2]{pet2}) we get the quasi-isomorphism of complexes
\[
F_{\cdot}^R(k)\otimes_{R} \Gamma_I(E^{\cdot}_R(M))\qism F_{\cdot}^R(k)\otimes_{R} \Check{C}_{\underline{y}}\otimes_{R} E^{\cdot}_R(M)
\]
Recall that $\Check{C}_{\underline{y}}$ denote the \v{C}ech complex with respect
to $\underline{y}$. Note that we have a quasi-isomorphism $\Check{C}_{\underline{y}}\otimes_R F^{\cdot}_R(k) \qism \Check{C}_{\underline{y}}\otimes_R k$.
Then the last complex is isomorphic to $k$ since $\Supp_R(k)\subseteq V(\mathfrak{m})$. Moreover there is a quasi-isomorphism 
\[
F_{\cdot}^R(k)\otimes_{R} \Check{C}_{\underline{y}}\otimes_{R}M\qism F_{\cdot}^R(k)\otimes_{R} \Check{C}_{\underline{y}}\otimes_{R} E^{\cdot}_R(M)
\]
Let $L_{\cdot}^R$ denote the free resolution of $M$. Then it follows that the morphism of complexes $F_{\cdot}^R(k)\otimes_{R} \Check{C}_{\underline{y}}\otimes_{R}L_{\cdot}^R\to F_{\cdot}^R(k)\otimes_{R} \Check{C}_{\underline{y}}\otimes_{R}M$ induces an isomorphism in homology. By the above remark we get the following quasi-isomorphism
\[
F_{\cdot}^R(k)\otimes_{R} \Check{C}_{\underline{y}}\otimes_{R}L_{\cdot}^R\to k\otimes_{R}L_{\cdot}^R
\]
Hence $H^i(F_{\cdot}^R(k)\otimes_{R} \Check{C}_{\underline{y}}\otimes_{R} E^{\cdot}_R(M))\cong H^i(k\otimes_{R}L_{\cdot}^R)$ for all $i\in \mathbb{Z}$. Then the homology in degree $0$ induces the homomorphism in $(3)$. This finishes the proof of the statement $(a)$.

Finally to prove the assertion of $(b)$ we assume that $R$ is Cohen-Macaulay. Apply the Local Duality Lemma \ref{2.1} (for $M= H^c_I(M)$) we have the isomorphism
\[
H^{d}_{\mathfrak m}(H^c_I(M))\cong \Tor_{n-d}^R(H^c_I(M),H^n_\mathfrak{m}(R)).
\]
Therefore by $(a)$ this gives the homomorphism in $(2)$. By Hom-Tensor Duality the Matlis dual of this last isomorphism induces the following isomorphism
\[
\Ext^{n-d}_R(H^c_I(M),D(H^n_\mathfrak{m}(R)))\cong D(H^d_{\mathfrak m}(H^c_I(M))).
\]
Then the homomorphism in $(2)$ can be easily derived from $(a)$. This completes the proof of the Proposition.
\end{proof}

Note that for any $R$-module $X$ there are the natural homomorphisms
\[
\Ext^i_R(k, X)\to H^i_{\mathfrak m}(X)
\]
for all $i\in \mathbb{N}$. In \cite[Section 4]{hs} M. Hochster has studied about these natural maps in case of canonical modules. In view of the above homomorphism we are interested to relate $\varphi_3$ with the homomorphisms of the next Proposition.

\begin{proposition} \label{9}
With the previous notation there are the following natural homomorphisms:
\[
\varphi_6: \Hom_R(M,k)\to H^c_{\mathfrak m}(D(H^c_I(M))),\text { and }
\]
\[
\varphi_7: D(M)\to H^c_{\mathfrak m}(D(H^c_{I}(M))).
\]
\end{proposition}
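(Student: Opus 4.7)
The plan is to apply Matlis duality and then $\mathfrak{m}$-local cohomology to the truncation complex, in the same style as the proofs in Proposition \ref{2.4}. Starting from the short exact sequence
\[
0 \to H^c_I(M)[-c] \to \Gamma_I(E^{\cdot}_R(M)) \to C^{\cdot}_M(I) \to 0,
\]
I apply the exact functor $D(\cdot) = \Hom_R(\cdot, E)$ to obtain a short exact sequence of complexes
\[
0 \to D(C^{\cdot}_M(I)) \to D(\Gamma_I(E^{\cdot}_R(M))) \to D(H^c_I(M))[c] \to 0.
\]
Tensoring further with $\Check{C}_{\xx}$ for $\xx$ generating $\mathfrak{m}$ up to radical preserves exactness, as $\Check{C}_{\xx}$ is a bounded complex of flat $R$-modules. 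The cohomology of the rightmost complex in degree $i$ is $H^{i+c}_{\mathfrak{m}}(D(H^c_I(M)))$, so that in degree zero it carries the target $H^c_{\mathfrak{m}}(D(H^c_I(M)))$.

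The crucial step is to identify the middle term $\Check{C}_{\xx}\otimes_R D(\Gamma_I(E^{\cdot}_R(M)))$ with $D(M)$, placed in degree zero, up to quasi-isomorphism. Since $D$ is exact it preserves quasi-isomorphisms, so combined with $\Gamma_I(E^{\cdot}_R(M)) \qism \Check{C}_{\yy} \otimes_R M$ for $\yy$ generating $I$ (cf.\ \cite{pet2}) and the Hom-tensor adjunction one obtains
\[
D(\Gamma_I(E^{\cdot}_R(M))) \qism \Hom_R(\Check{C}_{\yy} \otimes_R M, E) \cong \Hom_R(\Check{C}_{\yy}, D(M)).
\]
After tensoring with $\Check{C}_{\xx}$, the inclusion $\Rad I \subseteq \Rad \mathfrak{m}$ ensures that the implicit $I$-adic completion encoded by $\Hom_R(\Check{C}_{\yy},-)$ is absorbed, leaving $\Gamma_{\mathfrak{m}}$ applied to $D(M)$. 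Since $M$ is finitely generated, $D(M)$ is Artinian and hence $\mathfrak{m}$-torsion, so the resulting complex is quasi-isomorphic to $D(M)$ concentrated in degree zero.

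With this identification in hand, the long exact cohomology sequence attached to the short exact sequence of complexes above yields, in degree zero, the natural map
\[
\varphi_7 \colon D(M) \to H^c_{\mathfrak{m}}(D(H^c_I(M))).
\]
The map $\varphi_6$ is then obtained by precomposing $\varphi_7$ with the natural inclusion $\Hom_R(M,k) \hookrightarrow \Hom_R(M,E) = D(M)$ induced by the socle inclusion $k \hookrightarrow E$. The main technical obstacle will be the identification of the middle complex with $D(M)$; this is a Greenlees--May type statement and is the dualized counterpart of the identification of $\Check{C}_{\xx} \otimes \Gamma_I(E^{\cdot}_R(M))$ with $\Gamma_{\mathfrak{m}}(E^{\cdot}_R(M))$ already used in the proof of Proposition \ref{2.4}(a)(1).
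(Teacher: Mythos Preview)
Your argument is essentially correct but follows a genuinely different route from the paper. The paper proceeds in the opposite order and with more elementary tools: it first defines $\varphi_6$ as the composite of the Matlis dual of $\varphi_3$ (giving $\Hom_R(M,k)\cong D(k\otimes_R M)\to \Ext^c_R(k,D(H^c_I(M)))$) with the natural map $\Ext^c_R(k,D(H^c_I(M)))\to H^c_{\mathfrak m}(D(H^c_I(M)))$. It then builds $\varphi_7$ level-by-level: applying the construction of $\varphi_3$ with $R/\mathfrak m^\alpha$ in place of $k$ gives maps $\Tor^R_c(R/\mathfrak m^\alpha,H^c_I(M))\to R/\mathfrak m^\alpha\otimes_R M$, whose Matlis duals $D(R/\mathfrak m^\alpha\otimes_R M)\to \Ext^c_R(R/\mathfrak m^\alpha,D(H^c_I(M)))$ pass to the direct limit; since $\varinjlim \Hom_R(R/\mathfrak m^\alpha,D(M))\cong D(M)$ (as $D(M)$ is $\mathfrak m$-torsion), this yields $\varphi_7$.

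Your approach dualizes the truncation complex once and for all and reads off $\varphi_7$ from a single long exact sequence, then obtains $\varphi_6$ by restriction. This is more conceptual, but the price is the Greenlees--May step $\Check{C}_{\underline{x}}\otimes_R \Hom_R(\Check{C}_{\underline{y}},D(M))\qism D(M)$, i.e.\ $R\Gamma_{\mathfrak m}L\Lambda^I(D(M))\simeq R\Gamma_{\mathfrak m}(D(M))\simeq D(M)$, which you correctly flag as the main obstacle and which the paper deliberately avoids. The paper's direct-limit argument is more elementary and stays entirely within the toolkit already set up; it also makes the compatibility with $D(\varphi_3)$ transparent, which is exactly what is exploited in the commutative diagrams of Theorem~\ref{1.1}. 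With your definitions that compatibility (your $\varphi_6$ agreeing with $D(\varphi_3)$ followed by the edge map) would need a separate naturality check before Theorem~\ref{1.1} can be used as written.
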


\begin{proof}
Since the Matlis dual of $\varphi_3$ induces the following natural homomorphism
\[
\Hom_R(M,k)\to \Ext^c_R(k, D(H^c_I(M))),
\]
because of $D(k\otimes_R M)\cong \Hom_R(M,k)$. Now take composition of this with the homomorphism $\Ext^c_R(k, D(H^c_I(M)))\to H^c_{\mathfrak m}(D(H^c_I(M)))$. Then we get the existence of first natural homomorphism as follows:
\[
\varphi_6: \Hom_R(M,k)\to H^c_{\mathfrak m}(D(H^c_I(M))).
\]
Now we show the existence of $\varphi_7$. To do this let $H = H^c_I(M)$ and $F_{\cdot}(R/\mathfrak m^\alpha)$ denote a minimal free resolution of $R/\mathfrak m^\alpha$ for $\alpha\in \mathbb N$. Then after the implement of the functor $ \cdot \otimes_R F_{\cdot}(R/\mathfrak m^\alpha)$ to the truncation complex gives us the following natural homomorphisms
\[
\Tor^{R}_c(R/\mathfrak m^\alpha, H^c_I(M))\to R/\mathfrak m^\alpha\otimes_R M
\]
for all $\alpha\in \mathbb N$ (see Proposition \ref{2.4} $(a)$). Then the Matlis dual of this induces the following homomorphism
\[
D(R/\mathfrak m^\alpha\otimes_R M)\to \Ext^c_R(R/\mathfrak m^\alpha, D(H^c_I(M)))
\]
for each $\alpha\in \mathbb N$. By passing to the direct limit of this gives rise to the homomorphism
\[
\varinjlim D(R/\mathfrak m^\alpha\otimes_R M)\to H^c_{\mathfrak{m}}(D(H^c_I(M))).
\]
By Hom-Tensor Duality we have $\varinjlim D(R/\mathfrak m^\alpha\otimes_R M) \cong \varinjlim \Hom_R(R/\mathfrak m^\alpha, D(M))$. Moreover support of the module $D(M)$ is contained in $\{\mathfrak{m}\}$. So the last module is isomorphic to $D(M)$. Hence it proves the existence of $\varphi_7$.
\end{proof}

In the next we are interested to characterize the injectivity and surjectivity of all the homomorphisms $\varphi_i,i=1,\dots,7$. In this direction the first result is the following:

\begin{lemma} \label{3.2} Let $R$ be a Cohen-Macaulay ring. With the above notation the following are true:
\begin{itemize}
\item[(1)] $\varphi_1$ is non-zero if and only if $\varphi_4$ is non-zero if and only if $\varphi_5$ is non-zero.
\item[(2)] The following conditions are equivalent:
\begin{itemize}
\item[(i)] $\varphi_1$ is injective (resp. surjective).
\item[(ii)] $\varphi_4$ is injective (resp. surjective).
\item[(iii)] $\varphi_5$ is surjective (resp. injective).
\end{itemize}
\end{itemize}
\end{lemma}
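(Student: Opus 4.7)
The plan is to reduce everything to properties of $\varphi_1$ by recognizing $\varphi_4$ and $\varphi_5$ as $\varphi_1$ and $D(\varphi_1)$ up to canonical isomorphism, and then invoking the exactness and faithfulness of Matlis duality on modules supported at $\{\mathfrak m\}$.

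First I would make the identifications explicit. Tracing through the construction in the proof of Proposition~\ref{2.4}(b), Lemma~\ref{2.1}(1) applied to $H^c_I(M)$ in place of $M$ gives a natural isomorphism
\[
\alpha\colon \Tor^R_{n-d}(H^c_I(M),H^n_{\mathfrak m}(R))\xrightarrow{\sim} H^d_{\mathfrak m}(H^c_I(M)),
\]
and $\varphi_4$ is, by construction, $\varphi_1\circ\alpha$. Dually, Lemma~\ref{2.1}(2) applied to the same module yields a natural isomorphism
\[
\beta\colon D(H^d_{\mathfrak m}(H^c_I(M)))\xrightarrow{\sim} \Ext^{n-d}_R(H^c_I(M),D(H^n_{\mathfrak m}(R))),
\]
and $\varphi_5=\beta\circ D(\varphi_1)$. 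Consequently, the non-zeroness, injectivity, and surjectivity of $\varphi_4$ are literally those of $\varphi_1$, while the corresponding properties of $\varphi_5$ are those of $D(\varphi_1)$.

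Next I would use that both $H^d_{\mathfrak m}(H^c_I(M))$ and $H^t_{\mathfrak m}(M)$ have support in $\{\mathfrak m\}$. On such modules $D=\Hom_R(\cdot,E)$ is faithful: every nonzero cyclic submodule has finite length, hence nonzero Matlis dual, and by exactness of $D$ this produces a nonzero quotient of the total dual. Applying $D$ to the canonical short exact sequences built from $\Ker(\varphi_1)$, $\im(\varphi_1)$, and $\Coker(\varphi_1)$ identifies $\im(D(\varphi_1))\cong D(\im(\varphi_1))$, $\Coker(D(\varphi_1))\cong D(\Ker(\varphi_1))$, and $\Ker(D(\varphi_1))\cong D(\Coker(\varphi_1))$. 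Faithfulness then gives: $\varphi_1=0 \Leftrightarrow D(\varphi_1)=0$; $\varphi_1$ is injective iff $D(\varphi_1)$ is surjective; and $\varphi_1$ is surjective iff $D(\varphi_1)$ is injective.

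Combining the two steps delivers both assertions (1) and (2). The only delicate point is verifying that the identifications $\varphi_4=\varphi_1\circ\alpha$ and $\varphi_5=\beta\circ D(\varphi_1)$ are forced by the constructions of Proposition~\ref{2.4}(b); this is essentially a naturality statement for Local Duality built into the way those maps were produced, and I do not expect a serious obstacle.
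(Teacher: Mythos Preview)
Your proposal is correct and follows essentially the same line as the paper: identify $\varphi_4$ with $\varphi_1$ via the Local Duality isomorphism of Lemma~\ref{2.1}(1), identify $\varphi_5$ with $D(\varphi_1)$ via Lemma~\ref{2.1}(2) and Hom--Tensor duality, and then transfer non-zeroness, injectivity, and surjectivity through these identifications. Your write-up is more explicit about the faithfulness of $D$ (which in fact holds for all $R$-modules since $E$ is an injective cogenerator, so the support restriction is unnecessary though harmless), but the underlying argument is the same.
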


\begin{proof}
Since $R$ is Cohen-Macaulay so by the Local Duality we have the isomorphism
\[
H^d_{\mathfrak m}(H^c_I(M))\cong\Tor_{n-d}^R(H^n_\mathfrak{m}(R),H^c_I(M)).
\]
So $\varphi_1$ is non-zero if and only if $\varphi_4$ is non-zero. By Hom-Tensor Duality it implies that $\varphi_4$ is non-zero if and only if $\varphi_5$ is non-zero. Therefore the statement in $(1)$ is shown to be true. Note that the equivalence of the conditions in the statement $(2)$ can be easily proved by the same arguments.
\end{proof}

In the particular case of a complete Gorenstein ring it was proven in \cite[Theorem 3.2]{waqas} that $\varphi_i,i=1,4,5$, are all non-zero.

The next result tells us that all the homomorphisms of Proposition \ref{2.4} are isomorphisms provided that $M$ is cohomologically complete intersection with respect to $I$. That is $H^i_I(M)= 0$ for all $i\neq c= \grade(I,M)$. Note that our next result is the generalization of \cite[Corollary 4.3]{waqas2}.

\begin{corollary}\label{10}
Let $I$ be an ideal of a local ring $(R,\mathfrak m)$. Suppose that $M$ is a non-zero finitely generated $R$-module with $H^i_I(M)= 0$ for all $i\neq c= \grade(I,M)$. Then the following conditions hold:
\begin{itemize}
\item[(1)] $\varphi_1$ is an isomorphism.
\item[(2)] $\varphi_2$ is an isomorphism.
\item[(3)] $\varphi_3$ is an isomorphism.
\item[(4)] Assume in addition that $R$ is Cohen-Macaulay. Then $\varphi_4$ is an isomorphism.
\item[(5)] $\varphi_5$ is an isomorphism.
\end{itemize}
\end{corollary}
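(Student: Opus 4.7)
The central observation is that the hypothesis $H^i_I(M)=0$ for all $i\neq c$ forces the truncation complex $C^{\cdot}_M(I)$ to be acyclic. Indeed, as recorded immediately after Definition \ref{2.2}, one has $H^i(C^{\cdot}_M(I))=0$ for $i\le c$ or $i>n$ and $H^i(C^{\cdot}_M(I))\cong H^i_I(M)$ for $c<i\le n$, and the hypothesis kills every remaining cohomology. Equivalently, the inclusion $H^c_I(M)[-c]\to \Gamma_I(E^{\cdot}_R(M))$ from the short exact sequence
\[
0\to H^c_I(M)[-c]\to \Gamma_I(E^{\cdot}_R(M))\to C^{\cdot}_M(I)\to 0
\]
is a quasi-isomorphism. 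The plan is then to push this quasi-isomorphism through the three functorial constructions used in the proof of Proposition \ref{2.4}, so that the boundary maps in the resulting long exact sequences vanish and each $\varphi_i$ is forced to be an isomorphism.

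For part $(1)$, I would apply $\Check{C}_{\underline{x}}\otimes_R -$ to the above sequence; since $\Check{C}_{\underline{x}}$ is a bounded complex of flat $R$-modules, it preserves acyclicity, so $\Check{C}_{\underline{x}}\otimes_R C^{\cdot}_M(I)$ is acyclic. The long exact cohomology sequence of (\ref{1a}) then collapses into isomorphisms in every degree, which in degree $t$ is precisely $\varphi_1$. Parts $(2)$ and $(3)$ follow by the same pattern after replacing the functor with $\Hom_R(F^R_{\cdot}(k),-)$ and $F^R_{\cdot}(k)\otimes_R -$ respectively: because $F^R_{\cdot}(k)$ is a bounded-above complex of finitely generated free $R$-modules and $C^{\cdot}_M(I)$ is bounded, both functors preserve the acyclicity of $C^{\cdot}_M(I)$, and so the long exact sequences associated to (\ref{a}) and (\ref{11a}) collapse into isomorphisms identifying $\varphi_2$ and $\varphi_3$ with isomorphisms in the appropriate degree.

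For parts $(4)$ and $(5)$, under the additional Cohen-Macaulay hypothesis on $R$, I would invoke Lemma \ref{3.2}, which tells us that $\varphi_1$, $\varphi_4$, and $\varphi_5$ share equivalent injectivity/surjectivity properties; since $\varphi_1$ has already been shown bijective in part $(1)$, the same holds for $\varphi_4$ and $\varphi_5$.

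The main technical point to verify is the preservation of acyclicity by $F^R_{\cdot}(k)\otimes_R -$ and $\Hom_R(F^R_{\cdot}(k),-)$. This rests on the boundedness of $C^{\cdot}_M(I)$: by Bass's formula $\mu^i(\mathfrak{p},M)=0$ for $i<\depth M_{\mathfrak{p}}$, and since $\depth M_{\mathfrak{p}}\ge \grade(I,M)=c$ for every $\mathfrak{p}\in V(I)$, the $I$-torsion part $\Gamma_I(E^i_R(M))$ vanishes for $i<c$; combined with the obvious vanishing in degrees $>n$, the complex $C^{\cdot}_M(I)$ is concentrated in degrees $[c,n]$. This boundedness, together with $F^R_{\cdot}(k)$ being a bounded-above complex of projectives, is exactly what allows the standard derived-functor fact that tensor and Hom with a projective resolution send bounded acyclic complexes to acyclic complexes, which is the only nontrivial ingredient in the plan above.
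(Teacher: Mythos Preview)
Your approach is essentially the same as the paper's: both rest on the observation that the hypothesis forces the truncation complex $C^{\cdot}_M(I)$ to be acyclic (equivalently, that $\Gamma_I(E^{\cdot}_R(M))$ is an injective resolution of $H^c_I(M)[-c]$), and then push this quasi-isomorphism through the three functorial constructions from Proposition~\ref{2.4}, deducing parts $(4)$ and $(5)$ from $(1)$ via Lemma~\ref{3.2}.

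One small correction to your final paragraph: the claim that $C^{\cdot}_M(I)$ is concentrated in degrees $[c,n]$ is not correct in general, since the minimal injective resolution $E^{\cdot}_R(M)$ need not be bounded above when $M$ has infinite injective dimension, and hence $\Gamma_I(E^{\cdot}_R(M))$ need not vanish in degrees $>n$ (only its \emph{cohomology} does). This does not damage your argument, however, because a bounded-above complex of projectives such as $F^R_{\cdot}(k)$ is $K$-projective, so both $F^R_{\cdot}(k)\otimes_R-$ and $\Hom_R(F^R_{\cdot}(k),-)$ send acyclic complexes to acyclic complexes without any boundedness hypothesis on the target; the boundedness discussion can simply be dropped.
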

\begin{proof}
Suppose that $E^{\cdot}_R(M)$ is a minimal injective resolution of $M.$ Then $\Gamma_I(E^{\cdot}_R(M))$ is an injective resolution of $H^c_I(M)[-c]$ (because of $H^i_I(M)= 0$ for all $i\neq c$). Then, for the \v{C}ech complex $\Check{C}_{\underline{x}}$ with respect to $\underline{x}$, we have the following quasi isomorphism
\[
(\Check{C}_{\underline{x}}\otimes_{R} H^c_{I}(M))[-c] \qism \Check{C}_{\underline{x}}\otimes_{R} \Gamma_I(E^{\cdot}_R(M)).
\]
Here we assume that $\underline{x}= x_1, \ldots ,x_r\in \mathfrak{m}$ such that $\Rad \mathfrak{m}= \Rad(\underline{x})R$. By the the proof of Proposition \ref{2.4} $(1)$ it follows that the complex on the right side is quasi isomorphic to $\Check{C}_{\underline{x}}\otimes_{R} E^{\cdot}_R(M)$. This proves that $\varphi_1$ is an isomorphism. By Lemma \ref{3.2} the isomorphism in $(1)$ proves the isomorphisms in $(4)$ and $(5)$.

Now for a minimal free resolution $F^{\cdot}_R(k)$ of $k$ we have the following quasi isomorphism
\[
(H^c_I(M)\otimes_R F^{\cdot}_R(k))[-c] \qism \Gamma_I(E^{\cdot}_R(M))\otimes_R F^{\cdot}_R(k).
\]
By the proof of Proposition \ref{2.4} $(3)$ it follows that $H^i(\Gamma_I(E^{\cdot}_R(M))\otimes_R F^{\cdot}_R(k))= 0$ for all $i\neq 0$ and it is $k\otimes_R M$ for $i=0$. This completes the proof of $(3)$.

Since $\Gamma_I(E^{\cdot}_R(M))$ is an injective resolution of $H^c_I(M)[-c]$. It induces the following quasi isomorphism
\[
\Hom_R(F_{\cdot}^R(k), H^c_I(M))[-c] \qism \Hom_R(F_{\cdot}^R(k), \Gamma_I(E^{\cdot}_R(M))).
\]
Again by the proof of Proposition \ref{2.4} $(2)$ it follows that the later complex is quasi isomorphic to $\Hom_R(F_{\cdot}^R(k), E^{\cdot}_R(M))$. Hence we get $\varphi_2$ is an isomorphism.
\end{proof}

\section{The homomorphisms $\varphi_1$ and $\varphi_2$}
In this section we investigate the natural homomorphisms $\varphi_1$ and $\varphi_2$ of Proposition \ref{2.4}. Here we will relate several interpretations of $\varphi_1$ and $\varphi_2$. In particular our first main result is the following:
\begin{theorem} \label{44}
Let $M$ be a non-zero Cohen-Macaulay $R$-module of $\dim_R(M)= t$. Suppose that $\grade(I,M)= c$ for an ideal $I$ and $d: = t-c$. Then the following conditions are equivalent:
\begin{itemize}
\item[(1)] $\varphi_1$ is injective and $H^i_{\mathfrak m} (H^c_I(M))= 0$ for all $i<  d$.
\item[(2)] $\varphi_2$ is injective and $\Ext^i_R(k,H^c_I(M))=0$ for all $i< d$.
\end{itemize}
\end{theorem}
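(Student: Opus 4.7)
The plan is to use Lemma~\ref{2.3} to translate the $\Ext$ modules appearing in condition~(2) into socles of the local cohomology modules appearing in~(1), and then to observe that under this translation $\varphi_2$ corresponds to $\Hom_R(k,\varphi_1)$. Since the relevant target modules are $\mathfrak m$-torsion, injectivity of $\varphi_1$ is detected by its restriction to the socle, which yields the equivalence of (1) and (2).

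The first step is essentially immediate from Lemma~\ref{2.3}. Applied to $X=H^c_I(M)$, it gives that the two vanishing hypotheses $H^i_{\mathfrak m}(H^c_I(M))=0$ and $\Ext^i_R(k,H^c_I(M))=0$ for $i<d$ are equivalent, together with a natural isomorphism $\Ext^d_R(k,H^c_I(M))\cong \Hom_R(k,H^d_{\mathfrak m}(H^c_I(M)))$. Since $M$ is Cohen--Macaulay of dimension $t$ one has $H^i_{\mathfrak m}(M)=0$ for $i<t$, so Lemma~\ref{2.3} applied to $X=M$ likewise yields $\Ext^t_R(k,M)\cong \Hom_R(k,H^t_{\mathfrak m}(M))$.

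The main technical step is to check that these two isomorphisms intertwine $\varphi_2$ and $\Hom_R(k,\varphi_1)$, i.e., that the square with top row $\varphi_2$, bottom row $\Hom_R(k,\varphi_1)$, and the Lemma~\ref{2.3} isomorphisms as vertical arrows commutes. For this I would revisit the proof of Proposition~\ref{2.4}: both $\varphi_1$ and $\varphi_2$ arise by applying a functor to the same truncation short exact sequence $0\to H^c_I(M)[-c]\to \Gamma_I(E^\cdot_R(M))\to C^\cdot_M(I)\to 0$, namely $\Cech\otimes_R(-)$ for $\varphi_1$ and $\Hom_R(F_\cdot^R(k),-)$ for $\varphi_2$. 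Since every morphism from $k$ into an injective $R$-module factors through its $\mathfrak m$-torsion part, there is a natural quasi-isomorphism $\Hom_R(F_\cdot^R(k),E^\cdot_R(-))\qism \Hom_R(F_\cdot^R(k),\Cech\otimes_R E^\cdot_R(-))$, which realizes $\varphi_2$ as $H^t$ of $\Hom_R(F_\cdot^R(k),-)$ applied to exactly the morphism of complexes whose $H^t$ computes $\varphi_1$. The Lemma~\ref{2.3} isomorphisms are then the edge maps of the spectral sequence $E_2^{p,q}=\Ext^p_R(k,H^q_{\mathfrak m}(X))\Rightarrow \Ext^{p+q}_R(k,X)$ obtained from this identification, and naturality of this spectral sequence in $X$ forces the desired square to commute.

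Finally, both $H^d_{\mathfrak m}(H^c_I(M))$ and $H^t_{\mathfrak m}(M)$ are $\mathfrak m$-torsion, and for any $R$-linear map $\psi$ between $\mathfrak m$-torsion modules, $\psi$ is injective if and only if $\Hom_R(k,\psi)$ is: if $\ker\psi\neq 0$ then, being $\mathfrak m$-torsion, it has nonzero socle sitting inside $\ker\Hom_R(k,\psi)$. Applying this to $\psi=\varphi_1$ and combining with the previous two steps yields (1)$\Leftrightarrow$(2). The principal obstacle is the commutativity in the middle step: from a derived-category viewpoint it is transparent that both $\varphi_1$ and $\varphi_2$ come from a single morphism via $\mathrm{R}\Hom_R(k,-)\simeq \mathrm{R}\Hom_R(k,\mathrm{R}\Gamma_{\mathfrak m}(-))$, but making this explicit within the truncation-complex framework used in the paper requires some careful bookkeeping with the \v{C}ech complex and the free resolution of $k$.
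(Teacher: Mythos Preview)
Your proposal is correct and follows essentially the same route as the paper: Lemma~\ref{2.3} handles the vanishing equivalence and identifies $\varphi_2$ with $\Hom_R(k,\varphi_1)$ via a commutative square coming from the truncation-complex construction in Proposition~\ref{2.4}, and the socle argument finishes things off. The only cosmetic difference is that for $(2)\Rightarrow(1)$ the paper names $\ker\varphi_1$ explicitly as $H^{t-1}_{\mathfrak m}(C^{\cdot}_M(I))$ via the long exact sequence~\ref{1a} before applying the socle criterion, whereas you apply that criterion directly to $\ker\varphi_1$; your version is a touch cleaner but the content is the same.
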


\begin{proof}
Note that the equivalence of both of the vanishing statements in $(1)$ and $(2)$ follows by Lemma \ref{2.3} (for $X= H^c_I(M)$). Then by Proposition \ref{2.4} it induces the commutative diagram
\[
\begin{array}{ccc}
  \Ext^d_R(k, H^c_I(M)) &  \to & H^d_{\mathfrak m}(H^c_I(M)) \\
  \downarrow \varphi_2&   & \downarrow \varphi_1\\
   \Ext^t_R(k, M) & \to &  H^t_{\mathfrak m}(M)
\end{array}
\]
This gives rise to the following commutative diagram
\[
\begin{array}{ccc}
  \Ext^d_R(k, H^c_I(M)) &  \to & \Hom_R(k,H^d_{\mathfrak m}(H^c_I(M))) \\
  \downarrow \varphi_2&   & \downarrow \\
   \Ext^t_R(k, M) & \to &  \Hom_R(k,H^t_{\mathfrak m}(M))
\end{array}
\]
Since $M$ is Cohen-Macaulay so $H^i_{\mathfrak m}(M)= 0$ for all $i\neq t$. Then, by the equivalence of the vanishing statements, both of the horizontal homomorphisms of the last diagram are isomorphisms (see Lemma \ref{2.3}).

We only need to prove the equivalence of the injectivity. For this let us assume that $\varphi_1$ is injective. It implies that $\Hom_R(k,H^d_{\mathfrak m}(H^c_I(M)))\to \Hom_R(k,H^t_{\mathfrak m}(M))$ is injective. Hence, by the above commutative diagram, $\varphi_2$ is injective.

Conversely, suppose that $\varphi_2$ is injective. Then the cohomology sequence of the exact sequence \ref{1a} provides the following exact sequence of $R$-modules
\begin{equation}\label{qw}
0\to H^{t-1}_{\mathfrak m}(C^{\cdot}_M(I))\to H^d_{\mathfrak m}(H^c_I(M))\to H^t_{\mathfrak m} (M)
\end{equation}
To this end recall that $H^i_{\mathfrak m}(M)= 0$ for all $i\neq t$ (since $M$ is Cohen-Macaulay). We will prove that $H^{t-1}_{\mathfrak m}(C^{\cdot}_M(I))= 0$. Apply the functor $\Hom_R(k,\cdot)$ to this exact sequence we get the following exact sequence
\[
0\to \Hom_R(k,H^{t-1}_{\mathfrak m}(C^{\cdot}_M(I)))\to \Hom_R(k,H^d_{\mathfrak m}(H^c_I(M)))\to \Hom_R(k,H^t_{\mathfrak m} (M))
\]
But $\Hom_R(k,H^d_{\mathfrak m}(H^c_I(M)))\to \Hom_R(k,H^t_{\mathfrak m}(M))$ is injective (by the above commutative diagram and the assumption on $\varphi_2$). It follows that $\Hom_R(k,H^{t-1}_{\mathfrak m}(C^{\cdot}_M(I)))= 0$. It is well-known that if $X$ is an $R$-module with support in $V({\mathfrak{m}})$. Then the socle of $X$ is zero if and only if $X$ is zero. This proves that $H^{t-1}_{\mathfrak m}(C^{\cdot}_M(I))= 0$ because of $\Supp(H^{t-1}_{\mathfrak m}(C^{\cdot}_M(I)))\subset \{\mathfrak{m}\}$. Hence
\[
\varphi_1: H^d_{\mathfrak m}(H^c_I(M))\to H^t_{\mathfrak m} (M)
\]
is injective (by sequence \ref{qw}). This completes the proof of the Theorem.
\end{proof}

In the next result we are going to prove the surjectivity of $\varphi_1$. In fact this result can be used to show that the natural homomorphism $\Ext^d_R(k, H^c_I(M))\to \Ext^t_R(k, M)$ is non-zero. Note that the following version of $\varphi_1$ and $\varphi_2$ is already proved in case of a local Gorenstein ring, see \cite[Theorem 4.4]{waqas}. Here we will generalize it to Cohen-Macaulay modules.
\begin{theorem} \label{4.4}
With the previous notation assume that $\varphi_2$ is surjective (resp. injective) and $\Ext^d_R(k, H^c_I(M))= 0$ for all $i> d$. Then $\varphi_1$ is surjective (resp. injective).
\end{theorem}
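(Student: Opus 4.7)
The plan is to adapt the strategy of Theorem \ref{44}, working with the long exact sequences obtained from the truncation complex and reducing the statements to the vanishing of hypercohomology groups of $C^{\cdot}_M(I)$. The new ingredient required is a translation of the Ext vanishing above degree $d$ into a corresponding local cohomology vanishing.

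First I would establish the auxiliary fact: if $X$ is any $R$-module with $\Ext^i_R(k, X) = 0$ for all $i > d$, then $H^i_{\mathfrak m}(X) = 0$ for all $i > d$. The argument exploits $H^i_{\mathfrak m}(X) = \varinjlim_n \Ext^i_R(R/\mathfrak m^n, X)$ together with the filtration of $R/\mathfrak m^n$ whose successive quotients $\mathfrak m^{j-1}/\mathfrak m^j$ are finite-dimensional $k$-vector spaces. The long exact sequence of $\Ext$ combined with an induction on $n$ propagates the vanishing from $k$ to every $R/\mathfrak m^n$, and taking the direct limit gives the claim. Applied with $X = H^c_I(M)$, this yields $H^i_{\mathfrak m}(H^c_I(M)) = 0$ for all $i > d$.

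Next I would apply both $\Cech \otimes_R -$ and $\Hom_R(F_{\cdot}^R(k), -)$ to the truncation short exact sequence, following the proofs of Proposition \ref{2.4} and Theorem \ref{44}. Together with the vanishings $H^{d+1}_{\mathfrak m}(H^c_I(M)) = 0 = \Ext^{d+1}_R(k, H^c_I(M))$, the resulting long exact cohomology sequences give
\[
\mathrm{coker}(\varphi_1) \cong H^t(\Cech \otimes_R C^{\cdot}_M(I)), \qquad \mathrm{coker}(\varphi_2) \cong H^t(\Hom_R(F_{\cdot}^R(k), C^{\cdot}_M(I))),
\]
and, using the Cohen--Macaulay vanishings $H^{t-1}_{\mathfrak m}(M) = 0 = \Ext^{t-1}_R(k, M)$ inherited from the setting of Theorem \ref{44}, analogous identifications $\ker(\varphi_1) \cong H^{t-1}(\Cech \otimes_R C^{\cdot}_M(I))$ and $\ker(\varphi_2) \cong H^{t-1}(\Hom_R(F_{\cdot}^R(k), C^{\cdot}_M(I)))$. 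The hypothesis on $\varphi_2$ therefore translates into the vanishing of $H^t$ (surjective case) or $H^{t-1}$ (injective case) of the complex $\Hom_R(F_{\cdot}^R(k), C^{\cdot}_M(I))$.

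The final and hardest step is to deduce the corresponding vanishing of the hypercohomology $H^{\cdot}(\Cech \otimes_R C^{\cdot}_M(I))$. Since this module is $\mathfrak m$-torsion, it vanishes if and only if its socle does, exactly as used in the proof of Theorem \ref{44}. The natural tool is the convergent spectral sequence
\[
E_2^{p, q} = \Ext^p_R(k, H^q(\Cech \otimes_R C^{\cdot}_M(I))) \Longrightarrow H^{p+q}(\Hom_R(F_{\cdot}^R(k), C^{\cdot}_M(I))),
\]
which arises from the quasi-isomorphism $\RHom_R(k, Y) \qism \RHom_R(k, \Rgam Y)$, valid because $k$ is supported at $\mathfrak m$. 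The \emph{main obstacle} is controlling the outgoing differentials at the edge position $(0, i)$ so that $E_\infty^{0, i}$ coincides with the socle $E_2^{0, i}$; this requires auxiliary vanishings of $H^q(\Cech \otimes_R C^{\cdot}_M(I))$ for $q < i$, which must be extracted from the truncation long exact sequence together with the Cohen--Macaulay hypothesis and the vanishing of $H^i_{\mathfrak m}(H^c_I(M))$ for $i > d$ established in the first step.
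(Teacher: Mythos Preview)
Your spectral-sequence strategy has a genuine gap at the ``final and hardest step,'' and the auxiliary vanishings you hope to extract are simply not available under the hypotheses of the theorem. Concretely, to force $E_2^{0,i} = E_\infty^{0,i}$ at $i = t$ (surjective case) or $i = t-1$ (injective case) you need $H^q(\Cech \otimes_R C^{\cdot}_M(I)) = 0$ for all $q < i$. Running the truncation long exact sequence with the Cohen--Macaulay hypothesis gives, for $q < t-1$, an isomorphism $H^q(\Cech \otimes_R C^{\cdot}_M(I)) \cong H^{q-c+1}_{\mathfrak m}(H^c_I(M))$, and here $q - c + 1 < d$. But Theorem~\ref{4.4} assumes only $\Ext^i_R(k, H^c_I(M)) = 0$ for $i > d$; there is no control whatsoever on $H^j_{\mathfrak m}(H^c_I(M))$ for $j < d$. (That vanishing is precisely the extra assumption built into Theorem~\ref{44}, which is a separate statement.) So the targets of the outgoing differentials from $(0,i)$ need not vanish, $E_\infty^{0,i} = 0$ does not force $E_2^{0,i} = 0$, and the argument stalls.

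The paper bypasses the spectral sequence altogether by a direct d\'evissage. One considers, for each $\alpha \in \mathbb N$, the natural maps
\[
f_\alpha: \Ext^d_R(R/\mathfrak m^\alpha, H^c_I(M)) \longrightarrow \Ext^t_R(R/\mathfrak m^\alpha, M)
\]
coming from the truncation complex, and proves by induction on $\alpha$ that each $f_\alpha$ is surjective (resp.\ injective). The inductive step uses the short exact sequence $0 \to \mathfrak m^\alpha/\mathfrak m^{\alpha+1} \to R/\mathfrak m^{\alpha+1} \to R/\mathfrak m^\alpha \to 0$: the resulting commutative diagram has its upper row right exact because the hypothesis $\Ext^i_R(k, H^c_I(M)) = 0$ for $i > d$ propagates (by the very induction you sketched in your first step) to give $\Ext^{d+1}_R(R/\mathfrak m^\alpha, H^c_I(M)) = 0$, and its lower row left exact because $\depth_R M = t$. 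The middle vertical map is a direct sum of copies of $\varphi_2$, so the snake lemma closes the induction. Passing to the direct limit over $\alpha$ yields $\varphi_1$. This route uses only the upper vanishing $i > d$ and never needs any information below degree $d$, which is exactly what your spectral-sequence approach cannot do without.
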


\begin{proof}
Let $H = H^c_I(M)$ then it is a consequence of the truncation complex that there are the natural homomorphisms
\[
f_\alpha: \Ext^d_R(R/\mathfrak m^\alpha, H)\to \Ext^t_R(R/\mathfrak m^\alpha, M)
\]
for all $\alpha\in \mathbb N$ (see Proposition \ref{2.4} $(2)$). We claim that $f_\alpha$ is surjective (resp. injective) for each $\alpha\in \mathbb N$. Let $\alpha= 1$ then, by assumption on $\varphi_2$, we are true. Now the short exact sequence
\begin{equation}\label{q1}
0\to \mathfrak m^\alpha/\mathfrak m^{\alpha+1}\to R/\mathfrak m^{\alpha+1}\to R/\mathfrak m^{\alpha}\to 0
\end{equation}
induces the following commutative diagram with exact rows
\[
\begin{array}{cccccccc}
  & & \Ext^d_R(R/\mathfrak m^\alpha, H) & \to & \Ext^d_R(R/\mathfrak m^{\alpha+1}, H) & \to & \Ext^d_R(\mathfrak m^{\alpha}/\mathfrak m^{\alpha+1}, H) & \to 0\\
    &   & \downarrow {f_\alpha }&  & \downarrow {f_{\alpha+1}} &   & \downarrow f  &\\
 0 & \to & \Ext^t_R(R/\mathfrak m^\alpha, M) & \to & \Ext^t_R(R/\mathfrak m^{\alpha+1}, M) & \to & \Ext^t_R(\mathfrak m^\alpha/\mathfrak m^{\alpha+1}, M) &
\end{array}
\]
To this end note the above row (resp. the lower row) is right exact (rep. left exact) because of assumption on the vanishing of Ext modules (resp. $\depth_R(M)= t$ see \cite[Theorem 1.2.5]{h}).

Since ${\mathfrak m}^s/{\mathfrak m}^{s+1}$ is a finite dimensional $k$-vector space. Then the natural homomorphism $f$ is surjective (resp. injective) because of $\varphi_2$ is surjective (resp. injective). It implies that $f_{\alpha}$ is surjective (resp. injective) for all $\alpha \in \mathbb{N}$ (by snake lemma and induction hypothesis). It completes the proof of the claim. So the following sequence of direct systems
\[
\{\Ext^d_R(R/\mathfrak m^\alpha, H^c_I(M))\}\mathop\to\limits^{f_\alpha} \{\Ext^t_R(R/\mathfrak m^\alpha, M)\}
\]
is right exact (resp. left exact). Since direct limit is exact functor on direct systems. So by passing to the direct limit it induces that
\[
\varphi_1 : H^d_{\mathfrak m}(H^c_I(M))\to H^t_{\mathfrak{m}}(M)
\]
is surjective (resp. injective).
\end{proof}

\begin{remark}\label{3.1}
Suppose that $0\neq M$ is a Cohen-Macaulay $R$-module with $\inj \dim_R(H^c_I(M))\leq d = t-c$. If $\varphi_2$ is surjective (resp. injective). Then $\varphi_1$ is surjective (resp. injective). It is clear from the proof of the last Theorem \ref{4.4}. The similar result is obtained in \cite[Theorem 4.4]{waqas} for a Gorenstein local ring.
\end{remark}

Note that the following result is a generalization of \cite[Corollary 4.6]{waqas2}. The proof given there is depend on the derived category theory. Here we get the similar result as a consequence of our previous Theorem \ref{4.4}.
\begin{corollary} \label{41}
Fix the notation of Theorem \ref{44}. Then the following conditions hold:
\begin{itemize}
\item[(1)] If $\varphi_2$ is an isomorphism and $\Ext^i_R(k,H^c_I(M))=0$ for all $i\neq d$. Then $\varphi_1$ an isomorphism and $H^i_{\mathfrak m} (H^c_I(M))= 0$ for all $i\neq d$.
\item[(2)] If $\varphi_1$ is an isomorphism and $H^i_{\mathfrak m} (H^c_I(M))= 0$ for all $i< d$. Then $\varphi_2$ is an isomorphism and $\Ext^i_R(k,H^c_I(M))=0$ for all $i< d$. Moreover there are isomorphism
    \[
\Ext^{i}_R(k,H^c_I(M))\rightarrow \Ext^{i+c}_R(k, M)
\]
for all $i> d$.
\end{itemize}
\end{corollary}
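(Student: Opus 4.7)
The plan is to derive Corollary \ref{41} by assembling Lemma \ref{2.3}, Theorem \ref{44}, and Theorem \ref{4.4}, together with an analysis of the long exact sequences attached to the truncation complex.

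For part (1), the Ext vanishing $\Ext^i_R(k, H^c_I(M)) = 0$ for $i < d$ combined with Lemma \ref{2.3} (applied to $X = H^c_I(M)$) gives $H^i_\mathfrak m(H^c_I(M)) = 0$ for $i < d$. Since $\varphi_2$ is injective and the Ext vanishes for $i < d$, Theorem \ref{44} (direction $(2) \Rightarrow (1)$) yields $\varphi_1$ injective; and since $\varphi_2$ is surjective with $\Ext^i_R(k, H^c_I(M)) = 0$ for $i > d$, Theorem \ref{4.4} yields $\varphi_1$ surjective. Hence $\varphi_1$ is an isomorphism. For the remaining claim $H^i_\mathfrak m(H^c_I(M)) = 0$ for $i > d$, I would invoke the convergent spectral sequence $E_2^{p,q} = \Ext^p_R(k, H^q_\mathfrak m(H^c_I(M))) \Rightarrow \Ext^{p+q}_R(k, H^c_I(M))$ arising from the identification $\RHom_R(k, -) \simeq \RHom_R(k, \Rgam(-))$, and inductively rule out a top nonzero $H^{q_\max}_\mathfrak m(H^c_I(M))$ with $q_\max > d$: the corresponding socle term $E_2^{0, q_\max} = \Hom_R(k, H^{q_\max}_\mathfrak m(H^c_I(M)))$ receives no incoming differentials, so its nonvanishing would contradict the hypothesis that the abutment $\Ext^{q_\max}_R(k, H^c_I(M))$ vanishes.

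For part (2), Lemma \ref{2.3} applied to $X = H^c_I(M)$ immediately yields $\Ext^i_R(k, H^c_I(M)) = 0$ for $i < d$. To show $\varphi_2$ is an isomorphism, I use the commutative square from the proof of Theorem \ref{44}:
\[
\begin{array}{ccc}
\Ext^d_R(k, H^c_I(M)) & \hookrightarrow & H^d_\mathfrak m(H^c_I(M)) \\
\downarrow \varphi_2 & & \downarrow \varphi_1 \\
\Ext^t_R(k, M) & \hookrightarrow & H^t_\mathfrak m(M)
\end{array}
\]
whose horizontal arrows are the socle identifications of Lemma \ref{2.3} (valid since $M$ is Cohen-Macaulay of depth $t$ and the Ext vanishing for $H^c_I(M)$ in degrees $<d$ applies). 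Since $\varphi_1$ is an isomorphism, its restriction to socles is an isomorphism, and by commutativity this restriction is $\varphi_2$; hence $\varphi_2$ is iso. For the ``moreover'' isomorphisms $\Ext^i_R(k, H^c_I(M)) \to \Ext^{i+c}_R(k, M)$ for $i > d$, I would apply $\Hom_R(F_{\cdot}^R(k), -)$ to the short exact sequence of the truncation complex and examine the resulting long exact sequence relating $\Ext^{i-c}_R(k, H^c_I(M))$, $\Ext^i_R(k, M)$, and $G^i := \Ext^i_R(k, C^\cdot_M(I))$. The hypotheses force $H^i(\Rgam(C^\cdot_M(I))) = 0$ for $i \leq t-1$ through the corresponding local cohomology long exact sequence, whence a derived-category version of Lemma \ref{2.3} yields $G^i = 0$ for $i \leq t-1$; combined with $\varphi_2$ iso this forces $G^t = 0$, and the tail of the Ext long exact sequence then delivers the asserted isomorphisms for $i > d$.

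The main obstacle is propagating information from the ``lower'' range $i \leq d$, where Lemma \ref{2.3} and the socle square apply directly, to the ``upper'' range $i > d$. In part (1) this manifests as extracting module-level vanishing $H^i_\mathfrak m(H^c_I(M)) = 0$ for $i > d$ from Ext-level vanishing, requiring a spectral sequence corner argument; in part (2) it manifests as controlling the hyperext terms $G^i$ for $i \geq t$ through the truncation long exact sequence. Both steps rely on the interplay between the truncation complex and the composition $\RHom_R(k, -) \circ \Rgam$.
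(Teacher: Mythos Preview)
Your derivation of the isomorphism of $\varphi_1$ in (1) from Theorems~\ref{44} and~\ref{4.4}, and of the isomorphism of $\varphi_2$ in (2) via the socle square, is correct and agrees with the paper. The two ``upper range'' steps, however, both have gaps, and both are repaired by the same missing ingredient.

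In part (1), your spectral-sequence corner argument for $H^i_{\mathfrak m}(H^c_I(M))=0$ when $i>d$ does not close: the term $E_2^{0,q_{\max}}=\Hom_R(k,H^{q_{\max}}_{\mathfrak m}(H^c_I(M)))$ indeed has no \emph{incoming} differentials, but the outgoing ones $d_r\colon E_r^{0,q_{\max}}\to E_r^{r,\,q_{\max}-r+1}$ land in rows $d\le q<q_{\max}$, which you have not shown to vanish. Thus $E_\infty^{0,q_{\max}}$ is only a submodule of $E_2^{0,q_{\max}}$, and the vanishing of the abutment $\Ext^{q_{\max}}_R(k,H^c_I(M))$ forces $E_\infty^{0,q_{\max}}=0$, not $E_2^{0,q_{\max}}=0$. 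The paper avoids this entirely with a one-line argument: $\dim_R H^c_I(M)\le t-c=d$ always holds (see \cite{k}), so Grothendieck's vanishing theorem gives $H^i_{\mathfrak m}(H^c_I(M))=0$ for $i>d$ directly, with no hypothesis on $\Ext$ needed.

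In part (2), the step ``$\varphi_2$ iso forces $G^t=0$'' is not justified: the Ext long exact sequence only shows that the map $\Ext^t_R(k,M)\to G^t$ is zero, whence $G^t\hookrightarrow\Ext^{d+1}_R(k,H^c_I(M))$, which need not vanish a priori. The same dimension bound fixes this too: combined with your hypothesis $H^i_{\mathfrak m}(H^c_I(M))=0$ for $i<d$ it yields $H^i_{\mathfrak m}(H^c_I(M))=0$ for all $i\neq d$, so the local-cohomology long exact sequence now kills $H^i(\Rgam(C^\cdot_M(I)))$ for $i\ge t$ as well, and $\Rgam(C^\cdot_M(I))$ is acyclic. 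Then $G^i=0$ for \emph{all} $i$, and the tail of the Ext sequence delivers the asserted isomorphisms. The paper handles both the $\varphi_2$ isomorphism and the ``moreover'' clause by a citation to \cite[Lemma~4.5]{waqas2}; once the dimension bound is inserted, your truncation-complex route becomes a valid and more self-contained alternative.
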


\begin{proof}
Firstly we prove the statement $(1)$. By Theorems \ref{4.4} and \ref{44} we only need to prove that $H^i_{\mathfrak m} (H^c_I(M))= 0$ for all $i> d$. To do this note that $\dim_R(H^c_I(M))\leq d=t-c$ (see \cite{k}). Then by Grothendieck's vanishing result, \cite[Theorem 6.1.2]{b}, it follows that $H^i_{\mathfrak m} (H^c_I(M))= 0$ for all $i> d$.

For the statement $(2)$ we only check the isomorphisms (for vanishing result see Lemma \ref{2.3} for $X= H^c_I(M)$ and $s=d$). But the isomorphisms follow from \cite[Lemma 4.5]{waqas2}.
\end{proof}

At the end of this sectoin we will relate the endomorphism rings of $M$ and the local cohomology module $H^c_{I}(M).$ This concept firstly studied by the author and Z. Zahid in \cite[Theorem 1.1]{z}. Before this let us make some identifications. We will denote $\hat{R}^{IR_\mathfrak p}_\mathfrak p$ by the completion of $R_\mathfrak p$ with respect to the ideal $IR_\mathfrak p$ where ${\mathfrak p}\in  V(I)\cap \Supp_R(M)$. Let $k({\mathfrak p})$ stand for the the residue field of $R_\mathfrak p$. Moreover for $c = \grade(I,M)$ we set $h({\mathfrak p}):= \dim(M_{\mathfrak p})- c$.

\begin{corollary} \label{01}
Let $0\neq M$ be a Cohen-Macaulay $R$-module of dimension $\dim_R(M)=t$. Let $I\subseteq R$ be an ideal of $c = \grade(I,M).$ Suppose that for all ${\mathfrak p}\in  V(I)\cap \Supp_R(M)$ the natural homomorphism
\[
\Ext^{h({\mathfrak p})}_{R_{\mathfrak p}}(k({\mathfrak p}),H^c_{IR_{\mathfrak p}}(M_{\mathfrak p}))\rightarrow \Ext^{\dim(M_{\mathfrak p})}_{R_{\mathfrak p}}(k({\mathfrak p}), (M_{\mathfrak p}))
\]
is an isomorphism and $\Ext^i_{R_{\mathfrak p}}(k({\mathfrak p}), H^c_{IR_{\mathfrak p}}(M_{\mathfrak p}))= 0$ for all $i\neq h({\mathfrak p})$. Then the natural homomorphism
\[
\Hom_{\hat{R}^{IR_\mathfrak p}_\mathfrak p}(\hat{M}^{IR_\mathfrak p}_\mathfrak p, \hat{M}^{IR_\mathfrak p}_\mathfrak p)\to \Hom_{R_\mathfrak p}(H^c_{IR_\mathfrak p}(M_\mathfrak p),H^c_{IR_\mathfrak p}(M_\mathfrak p))
\]
is an isomorphism for all ${\mathfrak p}\in  V(I)\cap \Supp_R(M)$.
\end{corollary}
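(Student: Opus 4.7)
The plan is to localize at each prime and then feed the output into a result already built up in the paper. Fix $\mathfrak p\in V(I)\cap\Supp_R(M)$ and work in $R_\mathfrak p$. There $M_\mathfrak p$ is Cohen-Macaulay of dimension $\dim(M_\mathfrak p)$, the grade of $IR_\mathfrak p$ on $M_\mathfrak p$ is still $c$, and $h(\mathfrak p)=\dim(M_\mathfrak p)-c$ plays the role of $d$ in the earlier sections. Since local cohomology and the natural homomorphisms $\varphi_1,\varphi_2$ of Proposition \ref{2.4} commute with localization, the hypothesis of the corollary is precisely the assumption of Corollary \ref{41}(1) applied to the ring $R_\mathfrak p$ and the module $M_\mathfrak p$.

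Next, apply Corollary \ref{41}(1) over $R_\mathfrak p$ to obtain that $\varphi_1$ is an isomorphism,
\[
H^{h(\mathfrak p)}_{\mathfrak p R_\mathfrak p}(H^c_{IR_\mathfrak p}(M_\mathfrak p))\xrightarrow{\sim} H^{\dim(M_\mathfrak p)}_{\mathfrak p R_\mathfrak p}(M_\mathfrak p),
\]
and that $H^i_{\mathfrak p R_\mathfrak p}(H^c_{IR_\mathfrak p}(M_\mathfrak p))=0$ for every $i\neq h(\mathfrak p)$. Thus the localized module $H^c_{IR_\mathfrak p}(M_\mathfrak p)$ has exactly one non-vanishing local cohomology module with respect to the maximal ideal of $R_\mathfrak p$, sitting in the same degree as the top local cohomology of $M_\mathfrak p$; this is the cohomological ``atomicity'' of $H^c_{IR_\mathfrak p}(M_\mathfrak p)$ that one wants for the endomorphism computation.

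Finally, I would invoke \cite[Theorem 1.1]{z}, which identifies the endomorphism ring of the $IR_\mathfrak p$-adic completion $\hat M^{IR_\mathfrak p}_\mathfrak p$ with the endomorphism ring of $H^c_{IR_\mathfrak p}(M_\mathfrak p)$ under a cohomological hypothesis of precisely the type produced in the previous paragraph. Since $\mathfrak p$ was arbitrary in $V(I)\cap\Supp_R(M)$, the claimed isomorphism follows. The main obstacle is verifying that the input required by \cite[Theorem 1.1]{z} matches the output of Corollary \ref{41}(1); concretely, one has to go through Matlis duality over the local ring $R_\mathfrak p$ and an identification of $\hat M^{IR_\mathfrak p}_\mathfrak p$ with the Matlis dual of a suitable local cohomology of $D(M_\mathfrak p)$, so that the vanishings from Corollary \ref{41}(1) force the truncation complex argument underlying \cite[Theorem 1.1]{z} to collapse to the claimed endomorphism isomorphism.
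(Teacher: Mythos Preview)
Your proposal has a genuine gap. After applying Corollary~\ref{41}(1) over $R_{\mathfrak p}$ you obtain the vanishing of the \emph{maximal-ideal} local cohomology $H^i_{\mathfrak p R_{\mathfrak p}}\bigl(H^c_{IR_{\mathfrak p}}(M_{\mathfrak p})\bigr)$ for $i\neq h(\mathfrak p)$, together with the isomorphism $\varphi_1$. But the hypothesis that \cite[Theorem~1.1]{z} actually requires for the endomorphism-ring isomorphism is the cohomologically complete intersection condition $H^i_{IR_{\mathfrak p}}(M_{\mathfrak p})=0$ for all $i\neq c$. These are different statements: one is about the $\mathfrak p R_{\mathfrak p}$-local cohomology of $H^c_{IR_{\mathfrak p}}(M_{\mathfrak p})$, the other about the $IR_{\mathfrak p}$-local cohomology of $M_{\mathfrak p}$ itself. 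Your final paragraph acknowledges an obstacle here but the sketch involving Matlis duality and a collapse of the truncation complex does not close it; in particular, knowing only $H^i_{\mathfrak p R_{\mathfrak p}}(H^c_{IR_{\mathfrak p}}(M_{\mathfrak p}))=0$ for $i\neq h(\mathfrak p)$ says nothing about the higher $H^i_{IR_{\mathfrak p}}(M_{\mathfrak p})$ unless you already control their supports.

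The paper fills this gap by proving the global vanishing $H^i_I(M)=0$ for all $i\neq c$ via induction on $\dim_R(M/IM)$. When $\dim_R(M/IM)=0$ every $H^i_I(M)$ is supported only at $\mathfrak m$, so the output of Corollary~\ref{41}(1) forces $H^i_{\mathfrak m}(C^{\cdot}_M(I))=0$ for all $i$, and then the support condition lets one conclude $H^i(C^{\cdot}_M(I))=0$, i.e.\ $H^i_I(M)=0$ for $i>c$. In the inductive step one uses the hypothesis at \emph{all} primes $\mathfrak q\subsetneq \mathfrak p$ in $V(I)\cap\Supp_R(M)$ to force $\Supp H^i_I(M)\subseteq\{\mathfrak m\}$ and then repeats the base-case argument. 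Your prime-by-prime localization never uses the hypothesis at smaller primes, and without that you cannot get the support of the higher $H^i_I(M)$ down to the maximal ideal. Once $H^i_I(M)=0$ for $i\neq c$ is established, \cite[Theorem~1.1]{z} applies directly; that final step you have right.
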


\begin{proof}
We claim that our assumption implies that $H^i_{I}(M)= 0$ for all $i\neq c$. To prove this we will use induction on $\dim_R(M/IM)$. Let $\dim_R(M/IM)= 0$ then it follows that $\Supp(H^i_I(M))\subseteq V({\mathfrak m})$ for all $i\in \mathbb{Z}$. So our assumption is true for $\mathfrak p= \mathfrak m$.

Then by Corollary \ref{41} $\varphi_1$ is an isomorphism and $H^i_{\mathfrak m} (H^c_I(M))= 0$ for all $i\neq d$. This implies, in view of the long exact sequence of cohomologies of sequence \ref{1a}, that $H^i_\mathfrak m(C^{\cdot}_M(I))= 0$ for all $i\in \mathbb Z.$ Recall that $H^i_{\mathfrak m}(M)= 0$ for all $i\neq t$ since $M$ is Cohen-Macaulay.

Since $\Supp_R(H^i(C^{\cdot}_M(I)))\subseteq V(\mathfrak m)$. So by \cite[Lemma 2.5]{waqas2} in view of definition of the truncation complex we have
\[
0=H^i_\mathfrak m(C^{\cdot}_M(I))\cong H^i(C^{\cdot}_M(I))\cong H^i_I(M)
\]
for all $c<i\leq n$. This proves the claim for $\dim(M/IM)= 0$. Now let us assume that $\dim(M/IM)> 0$. Then it is easy to see that $\dim(M_{\mathfrak p}/IM_{\mathfrak p})< \dim(M/IM)$
for all ${\mathfrak p} \in V(I)\cap \Supp_R(M) \setminus \{ \mathfrak m\}$. By the induction hypothesis we have
\[
H^i_{IR_{\mathfrak p}}(M_{\mathfrak p})= 0
\]
for all $i\neq c$ and for all ${\mathfrak p} \in V(I)\cap \Supp(M)\setminus \{ \mathfrak m\}$. That is $\Supp(H^i_I(M))\subseteq V({\mathfrak m})$ for all $i\neq c$. Then by the similar arguments as we use above, for $\mathfrak p= \mathfrak m$, our claim is true. That is $H^i_{I}(M)= 0$ for all $i\neq c$. Then by \cite[Proposition 2.7 and Lemma 4.4]{waqas2} it follows that
\[
H^i_{IR_{\mathfrak p}}(M_{\mathfrak p})= 0
\]
for all $i\neq c=\grade(IR_\mathfrak p, M_\mathfrak p)$ and for all ${\mathfrak p}\in V(I) \cap \Supp_R M$. Now the existence of the natural homomorphism
\[
\Hom_{\hat{R}^{IR_\mathfrak p}_\mathfrak p}(\hat{M}^{IR_\mathfrak p}_\mathfrak p, \hat{M}^{IR_\mathfrak p}_\mathfrak p)\to \Hom_{R_\mathfrak p}(H^c_{IR_\mathfrak p}(M_\mathfrak p),H^c_{IR_\mathfrak p}(M_\mathfrak p))
\]
was shown in \cite[Theorem 1.1]{z}. But $H^i_{IR_{\mathfrak p}}(M_{\mathfrak p})= 0$ for all $i\neq c=\grade(IR_\mathfrak p, M_\mathfrak p)$. Hence it proves our required isomorphism in view of \cite[Theorem 1.1]{z}.
\end{proof}

\begin{remark}\label{3.1}
Note that it is unknown to us whether the last Corollary \label{01} is true for only ${\mathfrak p}= {\mathfrak m}$, the maximal ideal. However it is true if we replace $M$ by a complete local Gorenstein ring (see \cite[Theorem 4.4]{waqas}).
\end{remark}

\section{The homomorphisms $\varphi_3,\varphi_6$ and $\varphi_7$}
In this section our intention is to investigate the homomorphisms $\varphi_3,\varphi_6$ and $\varphi_7$ of Propositions \ref{2.4} and \ref{9} in more details. In fact our investigation is useful to prove that the natural homomorphism $\Tor^{R}_c(k,H^c_I(M))\to k\otimes_R M$ is non-zero. First of all note that the motivation of the next Proposition is the relation between the endomorphism rings of modules $H^c_I(M)$ and $H^c_J(M)$ such that $c = \grade(I,M)= \grade(J,M)$ and $J\subseteq I$. This was firstly introduced by P. Schenzel in case of $M=R$ a local Gorenstein ring (see \cite[Theorem 1.2]{p7}). Moreover for an extension to modules we refer to \cite[Theorem 1.1]{waqas1} and \cite[Theorem 1.2]{z}. Here we are interested to prove the similar result for Tor modules.
\begin{proposition}\label{12}
Let $M$ be a non-zero finitely generated $R$-module such that $c = \grade(I,M)= \grade(J,M)$ where $J\subseteq I$ are two ideals. Then we have the following results:
\begin{itemize}
\item[(1)] There is a natural homomorphism
\[
\Tor^{R}_c(k, H^c_I(M))\to \Tor^{R}_c(k, H^c_J(M)).
\]
\item[(2)] Suppose that $\Rad IR_{\mathfrak{p}}= \Rad JR_{\mathfrak{p}}$ for all ${\mathfrak{p}}\in V(J)\cap \Supp_R(M)$ such that $\depth_{R_\mathfrak{p}} (M_{\mathfrak{p}})\leq c$. Then the above natural homomorphism is an isomorphism.
\end{itemize}
\end{proposition}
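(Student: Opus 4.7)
My plan is to build the natural homomorphism in (1) by functoriality, and for (2) to exhibit this map as the inclusion $\Gamma_I(H^c_J(M)) \hookrightarrow H^c_J(M)$ and then make the hypothesis force the vanishing of $\Tor^R_c(k,-)$ on its cokernel.

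For (1), the inclusion $J \subseteq I$ gives $V(I) \subseteq V(J)$, so $\Gamma_I$ is a sub-functor of $\Gamma_J$; passing to the $c$-th right derived functor yields a natural $R$-linear morphism $H^c_I(M) \to H^c_J(M)$, and applying the covariant functor $\Tor^R_c(k,-)$ produces the required homomorphism. Equivalently, in keeping with the truncation-complex style of Proposition \ref{2.4}, one obtains the map by applying $F_\cdot^R(k) \otimes_R -$ to the induced morphism of short exact sequences
\[
\begin{array}{ccccccc}
0 & \to & H^c_I(M)[-c] & \to & \Gamma_I(E^{\cdot}_R(M)) & \to & C^{\cdot}_M(I) \to 0 \\
  &     & \downarrow   &     & \downarrow               &     & \downarrow              \\
0 & \to & H^c_J(M)[-c] & \to & \Gamma_J(E^{\cdot}_R(M)) & \to & C^{\cdot}_M(J) \to 0
\end{array}
\]
and reading off the leftmost column in homological degree $c$.

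For (2), the Grothendieck spectral sequence $E_2^{p,q} = H^p_I(H^q_J(M)) \Rightarrow H^{p+q}_I(M)$ (arising from the identity $\Gamma_I \circ \Gamma_J = \Gamma_I$, valid since $J \subseteq I$) combined with the vanishings $H^q_J(M) = 0$ for $q < c$ identifies $H^c_I(M) \cong \Gamma_I(H^c_J(M))$; hence the map in (1) is the natural inclusion, fitting into a short exact sequence
\[
0 \to H^c_I(M) \to H^c_J(M) \to N \to 0
\]
with $N := H^c_J(M)/H^c_I(M)$ satisfying $\Gamma_I(N) = 0$. The long exact sequence of $\Tor^R_\bullet(k,-)$ then reduces the claim to the two vanishings
\[
\Tor^R_c(k, N) = 0 = \Tor^R_{c+1}(k, N).
\]

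Next I would control $\Supp_R(N)$: for $\mathfrak p \in \Supp_R(N) \subseteq V(J) \cap \Supp_R(M)$, if $\depth_{R_\mathfrak p}(M_\mathfrak p) \leq c$ then the hypothesis gives $\Rad IR_\mathfrak p = \Rad JR_\mathfrak p$, whence $H^c_{IR_\mathfrak p}(M_\mathfrak p) = H^c_{JR_\mathfrak p}(M_\mathfrak p)$ and $N_\mathfrak p = 0$, a contradiction; thus $\Supp_R(N) \subseteq \{\mathfrak p \in V(J) \cap \Supp_R(M) : \depth_{R_\mathfrak p}(M_\mathfrak p) > c\}$. The delicate part---and the main obstacle---is converting this support bound into the two $\Tor$-vanishings. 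I would attack it through Matlis duality: by Hom-Tensor adjunction $D(\Tor^R_i(k, N)) \cong \Ext^i_R(k, D(N))$, and since Matlis duality is faithful on modules supported at $\mathfrak m$, the required vanishings are equivalent to $\Ext^i_R(k, D(N)) = 0$ for $i \leq c+1$; by Lemma \ref{2.3} this is the same as $H^i_{\mathfrak m}(D(N)) = 0$ for $i \leq c+1$, i.e., $\grade_{R}(\mathfrak m, D(N)) \geq c+2$. To produce this bound I would Matlis-dualize the short exact sequence to $0 \to D(N) \to D(H^c_J(M)) \to D(H^c_I(M)) \to 0$ and exploit the structural description of $D(H^c_{\bullet}(M))$ via Lemma \ref{2.1} (reducing to $\hat R$ if necessary) together with the techniques used in \cite[Theorem 1.2]{p7} and \cite[Theorem 1.2]{z}; the support restriction on $N$ then propagates to force $H^i_{\mathfrak m}$ of the two outer terms of the dualized sequence to agree in the range $i \leq c+1$, yielding the required grade bound.
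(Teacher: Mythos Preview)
Your construction in (1) is correct and matches the paper: both arguments exhibit the injection $H^c_I(M)\hookrightarrow H^c_J(M)$ (the paper via the direct system $0\to \Ext^c_R(R/I^\alpha,M)\to \Ext^c_R(R/J^\alpha,M)$, you via functoriality of $\Gamma$), then apply $\Tor^R_c(k,-)$.

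Your argument for (2), however, has a genuine gap, and it stems from missing the main point. The paper does \emph{not} try to prove only that $\Tor^R_c(k,N)=\Tor^R_{c+1}(k,N)=0$; it proves the much stronger statement $N=0$, i.e.\ that $H^c_I(M)\to H^c_J(M)$ is an isomorphism of modules. Concretely, from
\[
0\to H^c_I(M)\to H^c_J(M)\to \varinjlim_\alpha \Ext^c_R(I^\alpha/J^\alpha,M)
\]
one shows $\Ext^c_R(I^\alpha/J^\alpha,M)=0$ for every $\alpha$ under the hypothesis of (2) (this is the content of \cite[Theorem 4.1(b)]{z}, via a grade computation for the finitely generated module $I^\alpha/J^\alpha$). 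Once $N=0$, the $\Tor$-isomorphism is immediate.

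Your route, by contrast, stops at the support restriction $\Supp_R(N)\subseteq\{\mathfrak p\in V(J)\cap\Supp_R(M):\depth_{R_\mathfrak p}(M_\mathfrak p)>c\}$ and then tries to extract $H^i_{\mathfrak m}(D(N))=0$ for $i\le c+1$ from it. That final paragraph is not a proof: you invoke ``the techniques of \cite{p7},\cite{z}'' and claim the support bound ``propagates'' to a grade bound on $D(N)$, but you give no mechanism for this. There is no general principle converting a constraint of the form $\Supp_R(N)\subseteq\{\mathfrak p:\depth M_\mathfrak p>c\}$ into $\grade(\mathfrak m,D(N))\ge c+2$; the depths here are depths of $M$, not of $N$ or $D(N)$, and Lemma~\ref{2.1} requires $R$ Cohen--Macaulay, which is not assumed. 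The claim that $H^i_{\mathfrak m}$ of $D(H^c_I(M))$ and $D(H^c_J(M))$ ``agree in the range $i\le c+1$'' is exactly what must be proved and cannot be asserted. In short: you correctly isolate $N$ and correctly bound its support, but then fail to close the argument---whereas the paper sidesteps the entire difficulty by showing $N=0$ outright.
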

\begin{proof}
Since $J\subset I$ it induces the following short exact sequence
\[
0\to I^{\alpha}/J^{\alpha}\to R/J^{\alpha}\to R/I^{\alpha}\to 0
\]
for each integer $\alpha \geq 1$. Let $E^{\cdot}_R(M)$ be a minimal injective resolution of $M.$ Then we have the exact sequence
\[
0\to \Hom_R(R/I^{\alpha}, E^{\cdot}_R(M))\to \Hom_R(R/J^{\alpha}, E^{\cdot}_R(M))\to \Hom_R(I^{\alpha}/J^{\alpha},E^{\cdot}_R(M))\to 0.
\]
Then the cohomology sequence of this, at degree c, gives rise to the exact sequence
\[
0\to \Ext^c_R(R/I^{\alpha}, M)\to \Ext^c_R(R/J^{\alpha}, M)\to \Ext^c_R(I^{\alpha}/J^{\alpha},M).
\]
To this end note that $\grade(I^{\alpha}/J^{\alpha},M)\geq c$ (see \cite[Proposition 2.1]{waqas1}). Now the direct limit is an exact functor. So pass to the direct limit of this sequence we get the exact sequence
\begin{equation}\label{q}
0\to H^c_{I}(M)\to H^c_{J}(M)\mathop\to\limits^f \lim\limits_{\longrightarrow} \Ext^c_R(I^{\alpha}/J^{\alpha},M)
\end{equation}
Let $N:= \im f$. Then this induces the short exact sequence
\[
0\to H^c_{I}(M)\to H^c_{J}(M)\to N\to 0
\]
So for a minimal free resolution $F^{\cdot}_R(k)$ of $k$ we have the following short exact sequence of complexes
\[
0\to H^c_I(M)\otimes_R F^{\cdot}_R(k)\to H^c_J(M)\otimes_R F^{\cdot}_R(k)\to N\otimes_R F^{\cdot}_R(k)\to 0
\]
Hence from the long exact sequens of cohomologies there is the following natural homomorphism
\[
\Tor^{R}_c(k, H^c_I(M))\to \Tor^{R}_c(k, H^c_J(M))
\]
it completes the proof of $(1)$.

For the proof of $(2)$ note that $\Ext^c_R(I^{\alpha}/J^{\alpha},M)= 0$ for all $\alpha\geq 1$ under the additional assumption in the statement $(2)$ (see the proof of \cite[Theorem 4.1(b)]{z}). Hence $(2)$ is true by virtue of the exact sequence \ref{q}.
\end{proof}

Consequently our Proposition \ref{12} gives us a characterization such that our natural homomorphism $\varphi_3$ becomes an isomorphism as follows:
\begin{corollary}\label{0}
Let $0\neq M$ be a finitely generated $R$-module of $\dim_R(M)= t$. Let $J\subseteq I$ be two ideals and $H^i_{J}(M)= 0$ for all $i\neq c= \grade(J,M)= \grade(I,M)$.
Suppose that $\Rad IR_{\mathfrak{p}}= \Rad JR_{\mathfrak{p}}$ for all prime ideals
${\mathfrak{p}}\in V(J)\cap \Supp_R(M)$ with $\depth_{R_\mathfrak{p}} (M_\mathfrak{p})\leq c$. Then the natural homomorphism
\[
\varphi_3: \Tor^{R}_c(k, H^c_I(M))\to k\otimes_R M
\]
is an isomorphism.
\end{corollary}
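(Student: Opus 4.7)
The plan is to deduce the corollary by combining two earlier results: Proposition \ref{12}(2), which handles the change of ideal from $I$ to $J$ at the level of $\Tor^R_c(k,-)$, and Corollary \ref{10}(3), which handles the vanishing hypothesis. Concretely, the hypothesis $\Rad IR_\mathfrak{p} = \Rad JR_\mathfrak{p}$ for all $\mathfrak{p} \in V(J) \cap \Supp_R(M)$ with $\depth_{R_\mathfrak{p}}(M_\mathfrak{p}) \leq c$ is exactly the assumption needed in Proposition \ref{12}(2) to conclude that the natural map
\[
\Tor^R_c(k, H^c_I(M)) \longrightarrow \Tor^R_c(k, H^c_J(M))
\]
is an isomorphism. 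Next, since the hypothesis $H^i_J(M) = 0$ for all $i \neq c = \grade(J,M)$ places the ideal $J$ and the module $M$ in the setting of Corollary \ref{10}(3), the natural homomorphism
\[
\varphi_3^J: \Tor^R_c(k, H^c_J(M)) \longrightarrow k \otimes_R M
\]
is an isomorphism.

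It then remains to identify $\varphi_3^I$ with the composite of these two isomorphisms, i.e.\ to verify that the diagram
\[
\begin{array}{ccc}
\Tor^R_c(k, H^c_I(M)) & \longrightarrow & \Tor^R_c(k, H^c_J(M)) \\
\varphi_3^I \downarrow \phantom{\varphi_3^I} & & \phantom{\varphi_3^J} \downarrow \varphi_3^J \\
k \otimes_R M & = & k \otimes_R M
\end{array}
\]
commutes. This is a naturality argument traced through the truncation complex construction of Proposition \ref{2.4}(a)(3). Because $J \subseteq I$ one has $V(I) \subseteq V(J)$ and hence a natural inclusion of functors $\Gamma_I \hookrightarrow \Gamma_J$, inducing a morphism of the defining short exact sequences of truncation complexes
\[
0 \to H^c_I(M)[-c] \to \Gamma_I(E^{\cdot}_R(M)) \to C^{\cdot}_M(I) \to 0
\]
to the analogous sequence for $J$, compatible with the natural map $H^c_I(M) \hookrightarrow H^c_J(M)$ from sequence (\ref{q}). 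Tensoring with a minimal free resolution $F^{\cdot}_R(k)$ of $k$ and passing to the long exact cohomology sequence in degree $0$ yields the required commutative square, since $\varphi_3^I$ and $\varphi_3^J$ are precisely the connecting/edge maps obtained from this procedure applied to $I$ and $J$ respectively.

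The main obstacle, and the only nontrivial point, is bookkeeping the naturality of the construction in Proposition \ref{2.4}(a)(3): once it is verified that the inclusion $\Gamma_I \hookrightarrow \Gamma_J$ induces a morphism of the two truncation triangles compatible with the canonical inclusion of the bottom cohomology modules, the commutativity of the square above is automatic. Combining this with the two isomorphisms above gives that $\varphi_3^I$ is an isomorphism, completing the proof.
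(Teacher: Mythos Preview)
Your proposal is correct and follows essentially the same approach as the paper: the paper's proof is the single sentence ``It is a consequence of Proposition \ref{12} and Corollary \ref{10},'' and you have spelled out exactly how those two results combine. Your additional verification of the commutativity of the square (naturality of $\varphi_3$ with respect to the inclusion $\Gamma_I\hookrightarrow\Gamma_J$) is a detail the paper leaves implicit, but it is the right point to check and your argument for it is sound.
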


\begin{proof}
It is a consequence of Proposition \ref{12} and Corollary \ref{10}.
\end{proof}

In the following result we will give a characterization to check that the natural homomorphism $\Tor^{R}_c(k,H^c_I(M))\to k\otimes_R M$ is non-zero. In fact in the parallel of Theorem \ref{44} we are succeeded to derive the equivalence of surjectivity and injectivity of $\varphi_3,\varphi_6$ and $\varphi_7$ in terms of vanishing of Betti numbers of the module $H^c_I(M)$.
\begin{theorem} \label{1.1}
Let $M$ be a non-zero finitely generated module over $R$ and $\dim_R(M)= t$. Suppose that $\grade(I,M)= c$ for an ideal $I$. Then the following conditions are equivalent:
\begin{itemize}
\item[(1)] $\varphi_3$ is surjective and $\Tor_i^R(k,H^c_I(M))=0$ for all $i< c$.
\item[(2)] $\varphi_6$ is injective and $H^i_{\mathfrak m} (D(H^c_I(M)))= 0$ for all $i<  c$.
\item[(3)] $\varphi_7$ is injective and $H^i_{\mathfrak m} (D(H^c_I(M)))= 0$ for all $i<  c$.
\end{itemize}
Moreover if any of the equivalent conditions holds then $H^c_{\mathfrak m} (D(H^c_I(M)))\neq 0$.
\end{theorem}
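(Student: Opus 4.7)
The plan is to Matlis-dualize $\varphi_3$ and reduce all three conditions to a single assertion about the resulting Ext map. Using the identifications $D(k\otimes_R M)\cong \Hom_R(M,k)$ (since $D(k)\cong k$) and $D(\Tor^R_i(k,X))\cong \Ext^i_R(k,D(X))$, the Matlis dual of $\varphi_3$ is the map
\[
\Hom_R(M,k)\to \Ext^c_R(k,D(H^c_I(M))),
\]
which is exactly the first factor in the construction of $\varphi_6$ in Proposition \ref{9}. Because $D$ is exact and $E$ is an injective cogenerator, $\varphi_3$ is surjective if and only if this dual is injective, and $\Tor^R_i(k,H^c_I(M))=0$ if and only if $\Ext^i_R(k,D(H^c_I(M)))=0$. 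By Lemma \ref{2.3} applied to $X=D(H^c_I(M))$, the latter vanishing is equivalent to $H^i_{\mathfrak m}(D(H^c_I(M)))=0$, so the vanishing parts of (1), (2), (3) all express the same condition.

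To obtain $(1)\Leftrightarrow (2)$, I would grant this common vanishing and invoke the last sentence of Lemma \ref{2.3} to get the isomorphism $\Ext^c_R(k,D(H^c_I(M)))\cong \Hom_R(k,H^c_{\mathfrak m}(D(H^c_I(M))))$. This factors the natural map $\Ext^c_R(k,D(H^c_I(M)))\to H^c_{\mathfrak m}(D(H^c_I(M)))$ as inclusion onto the socle of the target, so in particular it is injective. Hence $\varphi_6$ is injective if and only if the Matlis dual of $\varphi_3$ is, and that is exactly the surjectivity of $\varphi_3$.

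For $(2)\Leftrightarrow (3)$, I would use that $D(M)$ is $\mathfrak m$-torsion (since $\Supp E=\{\mathfrak m\}$ forces $\Supp D(M)\subseteq \{\mathfrak m\}$), as is $H^c_{\mathfrak m}(D(H^c_I(M)))$. A homomorphism of $\mathfrak m$-torsion modules is injective if and only if its restriction to socles is injective, because its kernel is $\mathfrak m$-torsion and a non-zero $\mathfrak m$-torsion module always has non-zero socle. The socle of $D(M)$ is $\Hom_R(k,D(M))\cong \Hom_R(M,k)$, and the construction of $\varphi_7$ in Proposition \ref{9} (the $\alpha=1$ term of the direct system being exactly the data defining $\varphi_6$) identifies $\varphi_7$ restricted to this socle with $\varphi_6$. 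This delivers $(2)\Leftrightarrow (3)$.

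The ``moreover'' clause follows at once: under any of the equivalent conditions $\varphi_6$ is an injection out of $\Hom_R(M,k)\cong \Hom_R(M/\mathfrak m M,k)$, and the source is non-zero by Nakayama's Lemma since $M\neq 0$, forcing $H^c_{\mathfrak m}(D(H^c_I(M)))\neq 0$. The only point I expect to need careful bookkeeping is verifying that $\varphi_7$ really does restrict to $\varphi_6$ on the socle of $D(M)$; this should reduce to naturality of the direct-limit construction of $\varphi_7$ along the inclusion $k\hookrightarrow R/\mathfrak m^{\alpha}$, combined with the Hom--Tensor identification $\Hom_R(R/\mathfrak m^{\alpha}, D(M))\cong D(R/\mathfrak m^{\alpha}\otimes_R M)$ already used in Proposition \ref{9}.
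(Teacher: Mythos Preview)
Your proof is correct and follows essentially the same approach as the paper: both rely on Matlis-dualizing $\varphi_3$, the Tor--Ext--local cohomology vanishing equivalence via Lemma \ref{2.3}, and the principle that injectivity of a map out of an $\mathfrak m$-torsion module is detected on socles. The only difference is organizational---the paper establishes $(1)\Leftrightarrow(3)$ first and then $(1)\Leftrightarrow(2)$ via commutative diagrams, whereas you prove $(1)\Leftrightarrow(2)$ directly and then $(2)\Leftrightarrow(3)$ by identifying $\varphi_6$ with the socle restriction of $\varphi_7$---but the underlying ideas are identical.
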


\begin{proof}
Firstly we prove that the statement in $(1)$ is equivalent to the statement in $(3)$. To do this apply Lemma \ref{2.3} for $X= D(H^c_I(M))$. Then it follows that the vanishing statement in $(1)$ is equivalent to the fact that $\Ext^i_R(k,D(H^c_I(M)))=0$ for all $i< c$. Since $\Ext^i_R(k,D(H^c_I(M)))\cong D(\Tor_i^R(k,H^c_I(M)))$ for all $i\in \mathbb{Z}$ (by Hom-tensor Duality). Therefore it proves the equivalence of the vanishing statements in $(1)$ and $(3)$.

Now by virtue of Propositions \ref{2.4} and \ref{9} the natural homomorphism $M\to k\otimes_R M$ induces the commutative diagram
\[
\begin{array}{ccc}
  D(k\otimes_R M) &  \to & D(M) \\
  \downarrow &   & \downarrow \varphi_7\\
   \Ext^c_R(k, D(H^c_I(M))) & \to &  H^c_{\mathfrak m}(D(H^c_I(M)))
\end{array}
\]
Then applying $\Hom_R(k, \cdot)$ to the above diagram it provides the following commutative diagram
\[
\begin{array}{ccc}
 D(k\otimes_R M) & \to & \Hom_R(k,D(M)) \\
  \downarrow &   & \downarrow \\
    \Ext^c_R(k, D(H^c_I(M))) & \to & \Hom_R(k, H^c_{\mathfrak m}(D(H^c_I(M))))
\end{array}
\]
Then both of the horizontal homomorphisms are isomorphisms (by Hom-Tensor Duality and the equivalence of the vanishing statements, see Lemma \ref{2.3}). Suppose that $\varphi_7$ is injective then the homomorphism $\Hom_R(k,D(M))\to \Hom_R(k, H^c_{\mathfrak m}(D(H^c_I(M))))$ is injective. By view of the commutative diagram $D(k\otimes_R M)\to \Ext^c_R(k, D(H^c_I(M)))$ is injective. But this last homomorphism is the Matlis dual of $\varphi_3$. This proves the surjectivity of $\varphi_3$.

For the converse let $\varphi_3$ be surjective. Then by the above arguments the homomorphism $\Hom_R(k,D(M))\to \Hom_R(k, H^c_{\mathfrak m}(D(H^c_I(M))))$ is injective. Now let $N:=\ker\varphi_7$ then it induces the following exact sequence
\begin{equation}\label{1qw1}
0\to N\to D(M)\to H^c_{\mathfrak m}(D(H^c_I(M)))
\end{equation}
We claim that $N= 0$. Note that the last sequence induces the following exact sequence
\[
0\to \Hom_R(k,N)\to \Hom_R(k,D(M))\to \Hom_R(k,H^c_{\mathfrak m} (D(H^c_I(M))))
\]
Then we have $\Hom_R(k,N)=0$ this proves the claim. Since the support of $N$ is contained in $\{\mathfrak{m}\}$. Then the result follows from sequence \ref{1qw1}. This finishes the proof of $(1)\Leftrightarrow (3).$

Now we prove that $(1)$ is equivalent to $(2)$. Note that the vanishing statement is obvious. Moreover there are natural homomorphisms
\[
D(k\otimes_R M)\to D(M) \text { and } D(M)\to H^c_{\mathfrak{m}}(D(H^c_I(M))).
\]
But the first map is injective and $\Hom_R(M,k)\cong D(k\otimes_R M)$. So by the equivalence of the statements $(1)$ and $(2)$ it follows that $\varphi_6$ is injective if $\varphi_3$ is surjective.

Conversely suppose $\varphi_6$ is injective. Then by Propositions \ref{2.4} and \ref{9} we have the commutative diagram
\[
\begin{array}{ccc}
  D(k\otimes_R M) &  = & D(k\otimes_R M) \\
  \downarrow &   & \downarrow \varphi_6\\
   \Ext^c_R(k, D(H^c_I(M))) & \to &  H^c_{\mathfrak m}(D(H^c_I(M)))
\end{array}
\]
To this end note that $D(k\otimes_R M)\cong \Hom_R(M,k)$. Then it induces the following commutative diagram
\[
\begin{array}{ccc}
 D(k\otimes_R M) & \to & \Hom_R(k,D(k\otimes_R M)) \\
  \downarrow &   & \downarrow \\
    \Ext^c_R(k, D(H^c_I(M))) & \to & \Hom_R(k, H^c_{\mathfrak m}(D(H^c_I(M))))
\end{array}
\]
Then by the similar arguments as we used above one can easily prove that $\varphi_3$ is surjective. Moreover $H^c_{\mathfrak m} (D(H^c_I(M)))\neq 0$ by definition of $\varphi_7$ in view of the equivalence of the above statements. Recall that $M\neq 0$. This finishes the proof of the Theorem.
\end{proof}

For the next Theorem we will make some more notations. Note that in case of $M=R$ we have the following homomorphisms:
\[
\psi_1:\Tor^{R}_c(k, H^c_I(M))\to k, \text{ and }
\]
\[
\psi_2:E\to H^c_{\mathfrak m} (D(H^c_I(R)))
\]
\begin{theorem}\label{111}
Let $I$ be an ideal of $R$ with $\grade(I)=c$. If the homomorphism $\psi_1$ is injective and $\Tor_i^R(k,H^c_I(R))=0$ for all $i< c$. Then the homomorphism $\psi_2$ is surjective and $H^i_{\mathfrak m} (D(H^c_I(R)))= 0$ for all $i<  c$.
\end{theorem}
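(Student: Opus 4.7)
The plan is to transpose the method used for Theorem \ref{4.4} into the Tor setting, and then apply Matlis duality. Set $H = H^c_I(R)$. Taking $M = R$ and replacing $k$ by $R/\mathfrak{m}^\alpha$ in the construction of Proposition \ref{2.4}(a)(3) yields, for each $\alpha \in \mathbb{N}$, a natural homomorphism
\[
g_\alpha : \Tor^R_c(R/\mathfrak{m}^\alpha, H) \to R/\mathfrak{m}^\alpha \otimes_R R = R/\mathfrak{m}^\alpha,
\]
and note that $g_1 = \psi_1$. The key claim, to be established by a single induction on $\alpha$, is that for every $\alpha \geq 1$ both (i) $\Tor^R_i(R/\mathfrak{m}^\alpha, H) = 0$ for $i < c$, and (ii) $g_\alpha$ is injective.

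The vanishing $H^i_{\mathfrak m}(D(H)) = 0$ for $i < c$ required in the conclusion is immediate from the hypothesis: by Hom-tensor duality,
\[
\Ext^i_R(k, D(H)) \cong D(\Tor^R_i(k, H)) = 0 \quad \text{for } i < c,
\]
whence Lemma \ref{2.3} applied to $X = D(H)$ gives the required vanishing.

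For the inductive step, apply the long exact $\Tor^R_\bullet(-, H)$ sequence to
\[
0 \to \mathfrak{m}^\alpha/\mathfrak{m}^{\alpha+1} \to R/\mathfrak{m}^{\alpha+1} \to R/\mathfrak{m}^\alpha \to 0.
\]
Because $\mathfrak{m}^\alpha/\mathfrak{m}^{\alpha+1}$ is a finite-dimensional $k$-vector space, the Tor groups $\Tor^R_i(\mathfrak{m}^\alpha/\mathfrak{m}^{\alpha+1}, H)$ are direct sums of $\Tor^R_i(k, H)$; in particular they vanish for $i < c$, and the natural map into $\mathfrak{m}^\alpha/\mathfrak{m}^{\alpha+1}$ in degree $c$ is a direct sum of copies of $\psi_1$, hence injective. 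Combining the induction hypothesis on $R/\mathfrak{m}^\alpha$ with the long exact sequence propagates the vanishing (i) to $\alpha+1$ and truncates the relevant portion to
\[
\Tor^R_{c+1}(R/\mathfrak{m}^\alpha, H) \to \Tor^R_c(\mathfrak{m}^\alpha/\mathfrak{m}^{\alpha+1}, H) \to \Tor^R_c(R/\mathfrak{m}^{\alpha+1}, H) \to \Tor^R_c(R/\mathfrak{m}^\alpha, H) \to 0.
\]
This row maps naturally (via $g$, $g_{\alpha+1}$, $g_\alpha$) into the lower row $0 \to \mathfrak{m}^\alpha/\mathfrak{m}^{\alpha+1} \to R/\mathfrak{m}^{\alpha+1} \to R/\mathfrak{m}^\alpha \to 0$. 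A short diagram chase (pull $x \in \ker g_{\alpha+1}$ back to $\Tor^R_c(\mathfrak{m}^\alpha/\mathfrak{m}^{\alpha+1}, H)$ via exactness, using injectivity of $g_\alpha$; then use injectivity of $\mathfrak{m}^\alpha/\mathfrak{m}^{\alpha+1} \hookrightarrow R/\mathfrak{m}^{\alpha+1}$ together with injectivity of $g$) yields injectivity of $g_{\alpha+1}$, completing the induction.

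Finally, apply the Matlis dual $D(\cdot)$. Since $D$ is exact, injectivity of $g_\alpha$ is equivalent to surjectivity of its dual; by Hom-tensor duality this dual is exactly
\[
f_\alpha : D(R/\mathfrak{m}^\alpha) \to \Ext^c_R(R/\mathfrak{m}^\alpha, D(H)),
\]
namely the approximation used in the proof of Proposition \ref{9} to build $\psi_2$. Passing to the direct limit over $\alpha$ and using exactness of $\varinjlim$, together with the identifications $\varinjlim D(R/\mathfrak{m}^\alpha) \cong E$ and $\varinjlim \Ext^c_R(R/\mathfrak{m}^\alpha, D(H)) \cong H^c_{\mathfrak m}(D(H))$, produces the surjectivity of $\psi_2$. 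The most delicate point in this program is verifying the naturality required for the diagram chase, i.e. that the $g_\alpha$'s assemble into a morphism from the long exact Tor sequence to the short exact quotient sequence; this is ensured by the functoriality of the truncation complex construction of Proposition \ref{2.4}(a)(3) in its $R/\mathfrak{m}^\alpha$ argument.
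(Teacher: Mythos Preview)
Your proposal is correct and follows essentially the same route as the paper's proof: build the maps $g_\alpha:\Tor^R_c(R/\mathfrak m^\alpha,H)\to R/\mathfrak m^\alpha$ from the truncation complex, propagate both the Tor-vanishing and the injectivity through the filtration $0\to\mathfrak m^\alpha/\mathfrak m^{\alpha+1}\to R/\mathfrak m^{\alpha+1}\to R/\mathfrak m^\alpha\to 0$ by induction, then dualize and pass to the direct limit. The only cosmetic differences are that the paper separates the vanishing induction from the injectivity induction and invokes the snake lemma where you spell out the chase, and that it deduces $H^i_{\mathfrak m}(D(H))=0$ for $i<c$ by citing Theorem~\ref{1.1} rather than by your direct appeal to Hom--tensor duality and Lemma~\ref{2.3}; the underlying argument is identical.
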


\begin{proof}
By Proposition \ref{9} there are the natural homomorphisms
\[
f_\alpha: \Tor^{R}_c(R/\mathfrak m^\alpha, H^c_I(R))\to R/\mathfrak m^\alpha
\]
for all $\alpha\in \mathbb N$. Moreover after the application of the functor $\Tor^{R}_i(\cdot, H^c_I(R))$ to the exact sequence \ref{q1} we get the exact sequence
\[
\Tor^{R}_i(\mathfrak m^\alpha/\mathfrak m^{\alpha+1}, H^c_I(R)) \to \Tor^{R}_i(R/\mathfrak m^{\alpha+1}, H^c_I(R))\to \Tor^{R}_i(R/\mathfrak m^\alpha, H^c_I(R))
\]
for all $i\in \mathbb Z$. Then by induction on $\alpha$ , in view of vanishing of Tor modules, this proves that $\Tor^{R}_i(R/\mathfrak m^\alpha, H^c_I(R)))=0$ for all $i< c$ and for all $\alpha\in \mathbb N$.

Now we show that $f_{\alpha}$ is injective for all $\alpha\in \mathbb N$. Clearly $f_1=\psi_1$ is injective. Then the short exact sequence \ref{q1} induces the following commutative diagram with exact rows
\[
\begin{array}{cccccccc}
  & & \Tor^{R}_c(\mathfrak m^\alpha/\mathfrak m^{\alpha+1}, H^c_I(M)) & \to & \Tor^{R}_c(R/\mathfrak m^{\alpha+1}, H^c_I(M)) & \to & \Tor^{R}_c(R/\mathfrak m^\alpha, H^c_I(M)) & \to 0\\
    &   & \downarrow f&  & \downarrow {f_{\alpha+1}} &   & \downarrow {f_\alpha }  &\\
 0 & \to & \mathfrak m^\alpha/\mathfrak m^{\alpha+1} & \to & R/\mathfrak m^{\alpha+1} & \to & R/\mathfrak m^\alpha & \to 0
\end{array}
\]
Note that the above row is exact because of $\Tor^{R}_i(R/\mathfrak m^\alpha, H^c_I(R))=0$ for all $i< c$ and for all $\alpha\in \mathbb N$. Then the natural homomorphism $f$ is injective because of $\mathfrak m^\alpha/\mathfrak m^{\alpha+1}$ is a finite dimensional $k$-vector space. Hence by induction, in view of snake lemma, it implies that $f_{\alpha}$ is injective for all $\alpha \in \mathbb{N}$. Take the Matlis dual of $f_{\alpha}$ it induces that the surjective homomorphism
\[
D(R/\mathfrak m^\alpha)\to \Ext^c_R(R/\mathfrak m^\alpha, D(H^c_I(R)))
\]
for all $\alpha\in \mathbb N$. Now apply the direct limit to these maps. Since the direct limit is an exact functor so it implies that the homomorphism
\[
\varinjlim D(R/\mathfrak m^\alpha)\to H^c_{\mathfrak{m}}(D(H^c_I(R)))
\]
is surjective. Moreover there is an isomorphism
\[
\varinjlim D(R/\mathfrak m^\alpha)\cong H^0_\mathfrak{m}(E).
 \]
But $H^0_\mathfrak{m}(E)\cong E$ because of $\Supp_R(E)\subseteq \{\mathfrak{m}\}$. Therefore it proves that $\psi_2$ is surjective. Moreover Theorem \ref{1.1} implies that $H^i_{\mathfrak m} (D(H^c_I(R)))= 0$ for all $i<  c$.
\end{proof}

\begin{corollary}\label{0010}
With the previous notation if the homomorphism $\psi_1$ is an isomorphism and $\Tor_i^R(k,H^c_I(R))=0$ for all $i< c$. Then the homomorphism $\psi_2$ is an isomorphism and $H^i_{\mathfrak m} (D(H^c_I(R)))= 0$ for all $i<  c$.
\end{corollary}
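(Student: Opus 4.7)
The plan is to recognize that this corollary is essentially a direct combination of Theorem \ref{1.1} and Theorem \ref{111}, specialized to the case $M = R$. The key observation is that when $M = R$, the natural maps $\varphi_3$ and $\varphi_7$ of Propositions \ref{2.4} and \ref{9} reduce precisely to $\psi_1$ and $\psi_2$ respectively. Indeed, $k \otimes_R R \cong k$, so $\varphi_3 : \Tor^R_c(k, H^c_I(R)) \to k \otimes_R R$ becomes $\psi_1$, and $D(R) = \Hom_R(R, E) \cong E$, so $\varphi_7 : D(R) \to H^c_{\mathfrak m}(D(H^c_I(R)))$ becomes $\psi_2$. These identifications are natural in the sense used to construct the maps from the truncation complex, so the equivalences of Theorem \ref{1.1} and the implication of Theorem \ref{111} transfer directly.

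Given this, the argument splits into two halves. First, since $\psi_1$ is an isomorphism, it is in particular injective; combined with the vanishing $\Tor_i^R(k, H^c_I(R)) = 0$ for $i < c$, Theorem \ref{111} delivers the surjectivity of $\psi_2$ together with the vanishing $H^i_{\mathfrak m}(D(H^c_I(R))) = 0$ for $i < c$. Second, since $\psi_1$ is also surjective, the equivalence $(1) \Leftrightarrow (3)$ of Theorem \ref{1.1} (applied with $M = R$) yields the injectivity of $\psi_2$. Putting both halves together, $\psi_2$ is simultaneously injective and surjective, hence an isomorphism, and the required vanishing of $H^i_{\mathfrak m}(D(H^c_I(R)))$ has already been obtained.

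There is no real obstacle here, as the technical heart of the argument has been absorbed into Theorems \ref{1.1} and \ref{111}. The only point worth verifying carefully is the identification of $\varphi_3$ with $\psi_1$ and of $\varphi_7$ with $\psi_2$ under the canonical isomorphisms $k \otimes_R R \cong k$ and $D(R) \cong E$; this is immediate from the construction of these maps via the truncation complex $C^{\cdot}_R(I)$ and the Matlis dual of the tensor product. Thus the corollary is a genuine corollary, and its proof consists of invoking the two previous results in succession.
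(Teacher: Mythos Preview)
Your proof is correct and follows essentially the same approach as the paper, which simply states that the result follows immediately from Theorems \ref{1.1} and \ref{111}. You have merely unpacked which half of the isomorphism of $\psi_2$ comes from which theorem, and verified the identifications $\varphi_3 = \psi_1$, $\varphi_7 = \psi_2$ in the case $M = R$.
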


\begin{proof}
This is immediately follows from Theorem \ref{1.1} and \ref{111}.
\end{proof}

\end{document}